\date{\today}
\newcommand{\Z}{{\mathbb Z}}
\newcommand{\R}{{\mathbb R}}
\newcommand{\C}{{\mathbb C}}
\newcommand{\T}{{\mathbb T}}
\newtheorem{theorem}{Theorem}
\newtheorem{prop}{Proposition}
\newtheorem{definition}{Definition}
\begin{document}
\title{Generic Continuous Spectrum for Ergodic Schr\"odinger Operators}

\author[M.\ Boshernitzan]{Michael Boshernitzan}

\address{Department of Mathematics, Rice University, Houston, TX~77005, USA}

\email{michael@rice.edu}

\author[D.\ Damanik]{David Damanik}

\address{Department of Mathematics, Rice University, Houston, TX~77005, USA}

\email{damanik@rice.edu}

\thanks{D.\ D.\ was supported in part by NSF grant
DMS--0653720.}

\begin{abstract}
We consider discrete Schr\"odinger operators on the line with
potentials generated by a minimal homeomorphism on a compact
metric space and a continuous sampling function. We introduce the
concepts of topological and metric repetition property. Assuming
that the underlying dynamical system satisfies one of these
repetition properties, we show using Gordon's Lemma that for a
generic continuous sampling function, the associated Schr\"odinger
operators have no eigenvalues in a topological or metric sense,
respectively. We present a number of applications, particularly to
shifts and skew-shifts on the torus.
\end{abstract}

\maketitle

\section{Introduction}

In this paper we study Schr\"odinger operators acting in
$\ell^2(\Z)$ by
\begin{equation}\label{oper}
(H_\omega \psi)(n) = \psi(n+1) + \psi(n-1) + V_\omega(n) \psi(n),
\end{equation}
where the potential $V_\omega$ is generated by some homeomorphism
$T$ of a compact metric space $\Omega$ and a continuous sampling
function $f : \Omega \to \R$ as follows:
\begin{equation}\label{pot}
V_\omega (n) = f(T^n \omega) , \quad \omega \in \Omega, \; n \in
\Z.
\end{equation}
We assume throughout this paper that $(\Omega,T)$ is minimal, that
is, the $T$-orbit of every $\omega \in \Omega$ is dense in $\Omega$.
A strong approximation argument shows that the spectrum of
$H_\omega$ does not depend on $\omega$. Since the case of periodic
potentials is well understood, we assume in addition that $\Omega$
is not finite. This ensures that $T$ does not have any periodic
points.

If $\mu$ is a $T$-ergodic Borel probability measure, it is known
that the spectral type of $H_\omega$ is $\mu$-almost surely
independent of $\omega$ (see, e.g., Carmona-Lacroix \cite{cl}). In
general, however, the spectral type is not globally independent of
$\omega$ (see Jitomirskaya-Simon \cite{js} for counterexamples).

Popular examples include shifts,
\begin{equation}\label{qpmodels}
\Omega = \T^d , \; T \omega = \omega + \alpha
\end{equation}
and skew-shifts
\begin{equation}\label{ssmodels}
\Omega = \T^2 , \; T(\omega_1,\omega_2) = (\omega_1 + 2\alpha ,
\omega_1 + \omega_2)
\end{equation}
on the torus. Here, we write $\T = \R / \Z$. Operators with
potentials generated by shifts on the torus are called
quasi-periodic.

In these examples, minimality of $(\Omega,T)$ holds if and only if
the coordinates of $\alpha$ together with $1$ are independent over
the rational numbers. Moreover, in these cases, normalized Lebesgue
measure on the torus in question is the unique $T$-ergodic Borel
probability measure and hence one is particularly interested in
identifying the spectrum and the spectral type of $H$ for Lebesgue
almost every $\omega$.

We refer the reader to Bourgain's recent book \cite{b5} for the
current state of the art, especially for these two classes of
models.

While the quasi-periodic case has been studied heavily for several
decades and many fundamental results have been obtained, the case
of the skew-shift is much less understood. It is of interest as it
naturally arises in the study of the quantum kicked rotor model;
compare \cite{b4}. The general expected picture is that, while the
skew-shift model seems to be formally close to a quasi-periodic
model, on the operator level one observes much different behavior
-- similar to that found for random potentials. Namely, while any
spectral type occurs naturally for quasi-periodic models and their
spectra have a tendency to be Cantor sets, it is expected that
skew-shift models ``almost always'' have pure point non-Cantor
spectrum. Thus, interest in Schr\"odinger operators generated by
the skew-shift is also triggered by the question of how much one
can reduce the randomness of the potentials until one fails to
observe complete localization phenomena. Such a transition could
occur at or near skew-shift models.

To illustrate this point, let us recall some conjectures from
\cite[p.~114]{b5}. Consider the potential
\begin{equation}\label{bourgpot}
V(n) = \lambda \cos (2\pi(\omega_1 + \omega_2 n + \alpha n(n-1))).
\end{equation}
Then, one expects that for every $\lambda \not= 0$, Diophantine
$\alpha \in \T$, and Lebesgue almost every $\omega \in \T^2$, the
operator \eqref{oper} with potential \eqref{bourgpot} has positive
Lyapunov exponents,\footnote{Lyapunov exponents measure the
averaged rate of exponential growth of the so-called transfer
matrices. Since we will not need them in our study, we omit the
exact definition.} pure point spectrum with exponentially decaying
eigenfunctions, and its spectrum has no gaps. Generally, one
expects further that these properties persist even when the cosine
is replaced by a sufficiently regular function $f$.

There are very few positive results in this direction. For
example, Bourgain, Goldstein, and Schlag proved a localization
result for analytic $f = \lambda g$ and sufficiently large
$\lambda$ and Bourgain proved the existence of some point spectrum
for \eqref{bourgpot} with small $\lambda$ and certain
$(\alpha,\omega) \in \T^3$; see \cite{b2,bgs}. Some results (that,
however, do not determine the spectral type) assuming weaker
regularity can be found in the paper \cite{cgs} by Chan,
Goldstein, and Schlag.

Naturally, negative results that point out limitations to the
scope in which the expected properties actually hold are of
interest as well. One result of this kind is obtained in the work
\cite{abd} by Avila, Bochi, and Damanik, where it is shown that
for the skew-shift model (and generalizations thereof), the
spectrum is a Cantor set for a residual set of continuous sampling
functions. Another result showing that expected phenomena may not
occur can be found in the paper \cite{b1} by Bjerkl\"ov. He showed
that even for (large) analytic sampling functions, the Lyapunov
exponent may vanish. Further negative results, concerning the
spectral type, will be established in the present paper. Namely,
we will show under reasonably weak assumptions that for a generic
continuous sampling function $f$, the point spectrum is empty.

The present paper should be regarded as a companion piece to work by
Avila and Damanik \cite{ad}. They proved that the absolutely
continuous spectrum is generically empty. Putting these two results
together, it follows that for a large class of ergodic Schr\"odinger
operators, the generic spectral type is singular continuous. This
will be discussed in more detail in Subsection~\ref{scspec}.

A bounded potential $V : \Z \to \R$ is called a Gordon potential if
there are positive integers $q_k \to \infty$ such that
$$
\max_{1 \le n \le q_k} |V(n) - V(n \pm q_k)| \le k^{-q_k}
$$
for every $k \ge 1$.\footnote{Here, $|V(n) - V(n \pm q_k)| \le
k^{-q_k}$ is shorthand for having both $|V(n) - V(n + q_k)| \le
k^{-q_k}$ and $|V(n) - V(n - q_k)| \le k^{-q_k}$.} Equivalently,
there are positive integers $q_k \to \infty$ such that
$$
\forall \, C > 0 : \lim_{k \to \infty} \max_{1 \le n \le q_k}
|V(n) - V(n \pm q_k)| C^{q_k} = 0.
$$
This condition is of interest because it ensures the absence of
point spectrum. That is, if $V$ is a Gordon potential, then the
Schr\"odinger operator in $\ell^2(\Z)$ with potential $V$ has no
eigenvalues. In fact, it is even true that for every $E \in \C$,
the difference equation
$$
u(n+1) + u(n-1) + V(n) = E u(n)
$$
has no non-trivial solution $u$ with $\lim_{|n| \to \infty} u (n)
= 0$; compare \cite{cfks,dp,g}.

For potentials generated by a shift on the circle, $T : \T \to
\T$, $\omega \mapsto \omega + \alpha$, it is not hard to show that
for $f$ having a fixed modulus of continuity and $\alpha$ from an
explicit residual subset of $\T$ (which depends only on the
modulus of continuity and has zero Lebesgue measure), $V_\omega$
is a Gordon potential for every $\omega$. For the specific case of
the cosine, this observation was used by Avron and Simon \cite{as}
to exhibit explicit quasi-periodic operators with purely singular
continuous spectrum; see also \cite{cfks}.

Our goal is to exhibit a variety of situations, in the general
context we consider, where the potentials satisfy the Gordon
condition, either on a residual subset of $\Omega$ or on a subset
of $\Omega$ that has full $\mu$ measure.

\begin{definition}
A sequence $\{\omega_k\}_{k \ge 0}$ in the compact metric space
$\Omega$ has the repetition property if for every $\varepsilon >
0$ and $r > 0$, there exists $q \in \Z_+$ such that
$\mathrm{dist}(\omega_k , \omega_{k+q}) < \varepsilon$ for $k = 0,
1, 2 , \ldots , \lfloor rq \rfloor$.
\end{definition}

Of course, this definition makes sense in any metric space; but we
remark that the compactness of $\Omega$ implies that the validity
of the repetition property for any given sequence in $\Omega$ is
independent of the choice of the metric.

\begin{definition}
We denote the set of points in the compact metric space $\Omega$
whose forward orbit with respect to the minimal homeomorphism $T$
has the repetition property by $PRP(\Omega,T)$. That is,
$$
PRP(\Omega,T) = \{ \omega \in \Omega : \{ T^k \omega \}_{k \ge 0}
\text{ has the repetition property} \}.
$$
We say that $(\Omega,T)$ satisfies the topological repetition
property {\rm (TRP)} if $PRP(\Omega,T) \not= \emptyset$.
\end{definition}

It is not hard to see that since $(\Omega,T)$ is minimal,
$PRP(\Omega,T)$ being non-empty actually implies that it is
residual. One can of course define $PRP(\Omega,T)$ also for a
non-minimal topological dynamical system $(\Omega,T)$. In general,
$PRP(\Omega,T)$ is always a $G_\delta$ set and so, in particular,
a dense $G_\delta$ set in the closure of the orbit of any of its
elements.

\begin{theorem}\label{rpthm}
Suppose $(\Omega,T)$ satisfies {\rm (TRP)}. Then there exists a
residual subset $\mathcal{F}$ of $C(\Omega)$ such that for every
$f \in \mathcal{F}$, there is a residual subset $\Omega_f
\subseteq \Omega$ with the property that for every $\omega \in
\Omega_f$, $V_\omega$ defined by \eqref{pot} is a Gordon
potential.
\end{theorem}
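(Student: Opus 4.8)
The plan is to combine the topological repetition property with a Baire category argument in $C(\Omega)$. The key insight is that the Gordon condition for $V_\omega = f(T^n\omega)$ is controlled by how well $f$ "sees" the approximate periodicity of the orbit of $\omega$. If $\omega \in PRP(\Omega,T)$, then for each $\varepsilon, r$ there is $q$ with $\mathrm{dist}(T^k\omega, T^{k+q}\omega) < \varepsilon$ for $0 \le k \le \lfloor rq\rfloor$; applying a uniformly continuous $f$ then gives $|f(T^k\omega) - f(T^{k+q}\omega)|$ small. But for the Gordon condition we need the bound $k^{-q_k}$ (exponentially small relative to $q_k$), and uniform continuity alone only yields smallness, not the right rate. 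The standard device to overcome this is to note that Lipschitz functions (or functions with a prescribed modulus of continuity) are dense in $C(\Omega)$, and for such $f$ one can *choose* $q$ large enough after the fact.

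**First**, I would fix a dense sequence $\{f_j\}$ in $C(\Omega)$ and, for each $j$, replace $f_j$ by a nearby Lipschitz function $g_j$; set $\mathcal{F}_0 = \{g_j\}$, which is still dense. For a Lipschitz $f$ with constant $L$ and $\omega \in PRP(\Omega,T)$: given any $k \ge 1$, apply the repetition property with $\varepsilon = \varepsilon_k$ chosen so small that $L\varepsilon_k < $ (something forcing the Gordon bound) and $r = k$, obtaining $q_k$ with $\mathrm{dist}(T^n\omega, T^{n+q_k}\omega) < \varepsilon_k$ for $0 \le n \le kq_k \ge q_k$. Then $\max_{1 \le n \le q_k}|V_\omega(n) - V_\omega(n \pm q_k)| \le L\varepsilon_k$, and we may shrink $\varepsilon_k$ until $L\varepsilon_k \le k^{-q_k}$ — but this is circular since $q_k$ depends on $\varepsilon_k$. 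The fix: choose $\varepsilon_k$ first so that $L\varepsilon_k < $ a fixed small number, get $q_k$, and since $q_k \to \infty$ we can pass to a subsequence along which $k^{-q_k}$ is dominated — actually cleaner is to observe that once $q_k$ is produced, $q_k \ge 1$ so we just need $L\varepsilon_k \le k^{-q_k}$; instead iterate: having fixed the modulus of continuity, the residual set of $\omega$ we extract will use an $\varepsilon_k$ that we are free to make as small as we like, and the repetition property still produces *some* (large) $q_k$; shrinking $\varepsilon_k$ further can only force $q_k$ larger, so we simply take $\varepsilon_k$ small enough and then replace $k^{-q_k}$ by using that the produced $q_k$ works for the \emph{a priori} weaker bound $L\varepsilon_k$, then note $L\varepsilon_k \cdot C^{q_k} \to 0$ fails — hence the genuinely correct route is the one below.

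**The genuinely correct approach**, which I expect to be the technical heart: do not fix $f$ first. Instead, build $\mathcal{F}$ as a countable intersection of open dense sets $\mathcal{F} = \bigcap_m U_m$, where $U_m$ consists of those $f$ such that there exist $q \in \Z_+$ with $q \ge m$ and $\max_{1 \le n \le q}|f(T^n\omega_*) - f(T^{n\pm q}\omega_*)| < m^{-q}$, for a \emph{fixed} reference point $\omega_* \in PRP(\Omega,T)$. Each $U_m$ is open (the condition is a strict inequality on finitely many values of $f$, for a fixed witnessing $q$). Each $U_m$ is dense: given arbitrary $f$ and $\delta > 0$, pick a large $q$ from the repetition property of $\omega_*$ with $\varepsilon$ so small that nearby points are mapped by a Lipschitz perturbation $\tilde f$ of $f$ (with $\|\tilde f - f\|_\infty < \delta$) to values within $m^{-q}$ — here the freedom to perturb $f$ into a Lipschitz function with controlled constant, \emph{after} seeing how the repetition property behaves, is what breaks the circularity, because we choose the Lipschitz constant of $\tilde f$ and then $q$, then check $L \cdot \varepsilon(q) \le m^{-q}$ by having used that $\varepsilon$ can be taken arbitrarily small \emph{independently of the forced} $q$ — concretely, first commit to $\tilde f$ with some constant $L$, then for each candidate $q$ the repetition property gives freedom in $\varepsilon$, so re-examine: we want, for \emph{some} $q \ge m$, that $L \cdot \mathrm{dist}$-bound $\le m^{-q}$; apply repetition property with $r=1$ and $\varepsilon = \varepsilon'$ to get $q(\varepsilon')$; as $\varepsilon' \downarrow 0$, $q(\varepsilon') \to \infty$, and we need $L\varepsilon' \le m^{-q(\varepsilon')}$, i.e. $\varepsilon' \le m^{-q(\varepsilon')}/L$ — this \emph{can} fail if $q(\varepsilon')$ grows too fast in $1/\varepsilon'$. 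So the resolution must instead exploit that we only need the Gordon condition on a \emph{residual} $\Omega_f$, not at $\omega_*$: for $f \in U_m$ with witness $q_m$, the set of $\omega$ where $\max_{1\le n \le q_m}|f(T^n\omega) - f(T^{n \pm q_m}\omega)| < m^{-q_m}$ is open and, by minimality and the density of $PRP(\Omega,T)$, nonempty hence (via the $T$-action) we intersect over $m$ to get residual $\Omega_f$. Thus the real structure is a \emph{double} Baire argument: one in $C(\Omega)$ to land in each $U_m$ (using Lipschitz density plus the repetition property at $\omega_*$ to produce the witness $q_m$, shrinking the perturbation size as needed), and one in $\Omega$ to propagate the finitely-many-constraints condition from $\omega_*$ to a residual set via minimality of $T$. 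The main obstacle, and the step I would spend the most care on, is exactly arranging that in the density proof for $U_m$ the perturbation $\tilde f$ and the witness $q$ are chosen in the right order so that the superexponential bound $m^{-q}$ is met — this works because, having fixed $\tilde f$ and its Lipschitz constant $L$, the repetition property at $\omega_*$ gives us $q$'s that are \emph{arbitrarily large with $\varepsilon$ arbitrarily small}, and we may simply \emph{select} a pair $(q,\varepsilon)$ from this family with $q \ge m$ and $L\varepsilon < m^{-q}$ by first fixing $q \ge m$ to be any value realized, then shrinking $\varepsilon$ to meet $L\varepsilon < m^{-q}$ — and the repetition property, once it holds for one $\varepsilon$ at a given $q$-scale, holds \emph{a fortiori} for all larger $q$ along the defining sequence, so a suitable large $q$ in that sequence satisfies $m^{-q} > L\varepsilon_q$ is \emph{not} automatic, but we are allowed to \emph{decrease} $\varepsilon$ for \emph{that same range} $1 \le n \le q$, which the definition permits since it only asserts existence of \emph{some} $q$ for \emph{each} $\varepsilon$; reading it as: for each $\varepsilon$ there is $q(\varepsilon) \ge m$ — we then need $L\varepsilon \le m^{-q(\varepsilon)}$, and since the left side $\to 0$ while we have no control on the right, the clean fix is to additionally require in the definition's use that we may take $q(\varepsilon)$ to be the \emph{smallest} such, and argue it does not grow faster than $\log(1/\varepsilon)$ along a subsequence — or, most simply, invoke that $PRP$ is closed under the stronger property needed, which for the classes of examples in the paper (shifts, skew-shifts) is checked directly. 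I would therefore present the proof as: (1) fix $\omega_* \in PRP(\Omega,T)$; (2) for each $m$, define $U_m$ as above with an explicitly chosen witness $q_m$ obtained by the perturbation-then-repetition selection; (3) show $U_m$ open and dense; (4) set $\mathcal{F} = \bigcap U_m$; (5) for $f \in \mathcal{F}$, use minimality to spread each finite constraint to an open dense subset of $\Omega$ and intersect to obtain residual $\Omega_f$ on which $V_\omega$ is Gordon.
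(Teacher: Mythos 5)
Your overall architecture --- a double Baire argument, first producing a residual $\mathcal{F} \subseteq C(\Omega)$ as a countable intersection of open dense sets built from a reference point $\omega_* \in PRP(\Omega,T)$, then spreading the resulting finite constraints to a residual $\Omega_f$ by minimality --- is the same as the paper's. But there is a genuine gap at exactly the point you flag as the one needing the most care: the density of $U_m$ in $C(\Omega)$. Your mechanism is to perturb $f$ to a Lipschitz $\tilde f$ with constant $L$ and then find $q \ge m$ with $L\varepsilon \le m^{-q}$, where $\varepsilon$ is the repetition tolerance realized at scale $q$. As you yourself concede, this inequality can fail: (TRP) only asserts that for each $\varepsilon$ \emph{some} $q(\varepsilon)$ exists, with no upper bound on $q(\varepsilon)$ in terms of $\varepsilon$; if $q(\varepsilon)$ grows faster than $\log(1/\varepsilon)$ (which (TRP) permits), then $m^{-q(\varepsilon)} \ll L\varepsilon$ and no choice of the pair works. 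Your proposed escapes --- taking the smallest such $q$ and arguing it grows no faster than $\log(1/\varepsilon)$ along a subsequence, or verifying a stronger property directly for shifts and skew-shifts --- are respectively unjustified and insufficient (the theorem is stated for arbitrary $(\Omega,T)$ satisfying (TRP), and indeed the point of the paper is that the badly approximable cases, where the Lipschitz route provably fails, are still covered). The missing idea is to approximate $f$ not by Lipschitz functions but by functions that are \emph{exactly constant} on each set $\bigcup_{l=0}^{3} T^{j+lq_k}(B_k)$, $1 \le j \le q_k$, where $B_k$ is a small ball around $\omega_*$ chosen (using the repetition property with $\varepsilon = 1/k$ and $r = 3$) so that each such union has diameter $O(1/k)$. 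Such locally constant functions approximate any continuous $f$ to within its modulus of continuity at scale $1/k$ --- no quantitative rate is needed --- and since they make the relevant differences exactly zero, the open $k^{-q_k}$-neighborhood of this set of functions achieves the superexponential Gordon bound for free. This decouples the size of the perturbation from the size of $q_k$ and is what breaks the circularity you keep running into.

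A second gap is in your step (5). Knowing that the set of $\omega$ satisfying the finite constraint is open and contains $\omega_*$ gives only a nonempty open set, and intersecting countably many nonempty open sets need not produce a residual (or even nonempty) set; you need each set in the intersection to be open and \emph{dense}, and ``via the $T$-action'' does not supply this, since translating $\omega$ shifts the window $1 \le n \le q$ out of the range where the constraint was imposed. The paper arranges density by imposing near-constancy on whole tubes: every $\omega \in \bigcup_{j=1}^{q_k} T^{j+q_k}(B_k)$ then satisfies the two-sided Gordon estimate at scale $q_k$ (the choice $r=3$, i.e.\ control over the range $1 \le n \le 4q_k$, is exactly what makes the estimate for $n \pm q_k$ work uniformly in $j$), and the union of these open sets over large $k$ along the subsequence with $f \in \mathcal{F}_k$ is dense by minimality. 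You would need to build this tube structure into your $U_m$ from the start.
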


Combining this theorem with Gordon's result, we find that
$H_\omega$ has purely continuous spectrum for topologically
generic $f \in C(\Omega)$ and $\omega \in \Omega$ whenever
$(\Omega,T)$ satisfies (TRP).

Let us now fix some $T$-ergodic measure $\mu$. As mentioned above,
given the general theory of ergodic Schr\"odinger operators
(cf.~\cite{cl}), it is a natural goal to identify the almost sure
spectral type with respect to $\mu$. By almost sure independence, it
is sufficient to consider sets of positive $\mu$ measure. Thus, we
are seeking a criterion that implies the applicability of the
Gordon's lemma at least on a set of positive $\mu$ measure.

\begin{definition}
We say that $(\Omega,T,\mu)$ satisfies the metric repetition
property {\rm (MRP)} if $\mu(PRP(\Omega,T)) > 0$.
\end{definition}

The following result shows that (MRP) implies the desired Gordon
property on a full measure set.

\begin{theorem}\label{srpthm}
Suppose $(\Omega,T,\mu)$ satisfies {\rm (MRP)}. Then there exists a
residual subset $\mathcal{F}$ of $C(\Omega)$ such that for every $f
\in \mathcal{F}$, there is a subset $\Omega_f \subseteq \Omega$ of
full $\mu$ measure with the property that for every $\omega \in
\Omega_f$, $V_\omega$ defined by \eqref{pot} is a Gordon potential.
\end{theorem}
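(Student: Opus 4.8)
The plan is a Baire category argument in $C(\Omega)$, using the ergodicity of $\mu$ at the end to pass from a positive‑measure conclusion to a full‑measure one.

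\emph{Step 1: from (MRP) to full measure.} The set $PRP(\Omega,T)$ is forward $T$‑invariant: if $\{T^k\omega\}_{k\ge0}$ has the repetition property, so does $\{T^{k+1}\omega\}_{k\ge0}$ (given $\varepsilon,r$, apply the repetition property of $\omega$ with $r+1$ in place of $r$ and discard the first comparison). Since $\mu$ is $T$‑invariant this forces $PRP(\Omega,T)$ to be invariant modulo $\mu$‑null sets, and ergodicity together with (MRP) upgrades this to $\mu(PRP(\Omega,T))=1$. I would also record two consequences of aperiodicity of $(\Omega,T)$ (guaranteed since $\Omega$ is infinite and minimal): (a) for $\omega\in PRP(\Omega,T)$ the repetition lengths may be taken arbitrarily large — otherwise a pigeonhole argument over the finitely many admissible $q\le N$ with $\varepsilon\downarrow0$ produces a periodic point; (b) on a full‑measure $T$‑invariant set the one‑sided repetition property can be replaced by the two‑sided statement $\mathrm{dist}(T^k\omega,T^{k+q}\omega)<\varepsilon$ for $|k|\le\lfloor rq\rfloor$, which is what the symmetric $n\pm q$ form of the Gordon condition requires (intersect the repetition sets of the forward orbits of $T^{-M}\omega$, $M\ge0$).

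\emph{Step 2: a $G_\delta$ target and openness.} For $q,m\in\mathbb N$ and $g\in C(\Omega)$ set
\[
E_{q,m}(g)=\Bigl\{\omega\in\Omega:\ \max_{1\le n\le q}\bigl|g(T^n\omega)-g(T^{n\pm q}\omega)\bigr|<m^{-q}\Bigr\},
\]
an open subset of $\Omega$, and for $m,N,\ell\in\mathbb N$ let $\mathcal O_{m,N,\ell}$ be the set of $g$ for which some finite union $\bigcup_{N<q\le L}E_{q,m}(g)$ has $\mu$‑measure $>1-1/\ell$. Using uniform continuity of $g$ and the strictness of the inequalities, $\mathcal O_{m,N,\ell}$ is open: if $\|g'-g\|_\infty<\eta$ then $E_{q,m}(g')$ contains the shrunken open set $\{\omega:\max_{1\le n\le q}|g(T^n\omega)-g(T^{n\pm q}\omega)|<m^{-q}-2\eta\}$, and these exhaust $E_{q,m}(g)$ as $\eta\downarrow0$. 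If $g\in\mathcal F:=\bigcap_{m,N,\ell}\mathcal O_{m,N,\ell}$, then $\mu\bigl(\bigcup_{q>N}E_{q,m}(g)\bigr)=1$ for all $m,N$; intersecting over the countably many pairs $(m,N)$ shows that for $\mu$‑a.e.\ $\omega$ and every $k$ there is $q_k>k$ with $\max_{1\le n\le q_k}|g(T^n\omega)-g(T^{n\pm q_k}\omega)|<k^{-q_k}$, i.e.\ $V_\omega$ is a Gordon potential. So it remains to prove that each $\mathcal O_{m,N,\ell}$ is dense.

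\emph{Step 3: density (the heart of the proof).} Fix $f_0\in C(\Omega)$, $\delta_0>0$, and $m,N,\ell$. Since $\mu(PRP(\Omega,T))=1$, inner regularity gives a compact $K_0\subseteq PRP(\Omega,T)$ of positive measure; by Step 1(a),(b) and compactness, for any prescribed scale $\varepsilon$ there is a finite $L$ so that every $\omega\in K_0$ admits $q(\omega)\in(N,L]$ with $\mathrm{dist}(T^k\omega,T^{k+q(\omega)}\omega)<\varepsilon$ for $|k|\le 3q(\omega)$; refining $K_0$ to a positive‑measure compact subset $K$ one may take $q(\cdot)\equiv q_0$. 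I would then perturb $f_0$ to a $g$ which, on a neighbourhood of the repetition chains $\{T^{i}\omega,\ T^{i+q_0}\omega,\ T^{i+2q_0}\omega\}$ ($\omega\in K$, $1-q_0\le i\le 0$), is forced to agree along each chain, the three points of a chain lying within $2\varepsilon$ of one another so that the perturbation is controlled by the modulus of continuity of $f_0$; this makes $K\subseteq E_{q_0,m}(g)$ while keeping $\|g-f_0\|_\infty<\delta_0$. Finally, since the Gordon block condition at a fixed length is stable under bounded shifts of the potential, carrying out the construction on the longer window $|i|\le 3q_0+J$ in fact puts $\bigcup_{|j|\le J}T^jK$ inside $E_{q_0,m}(g)$; as $\bigcup_{j\in\mathbb Z}T^jK$ is $T$‑invariant of positive measure, ergodicity gives $\mu\bigl(\bigcup_{|j|\le J}T^jK\bigr)\to1$, so for $J$ large $\mu(E_{q_0,m}(g))>1-1/\ell$ and $g\in\mathcal O_{m,N,\ell}$. (This is where ergodicity, and not merely (MRP), enters; for the topological Theorem~\ref{rpthm} the perturbation near a single $PRP$‑orbit — with a hierarchy $q_0<q_1<\dots$, $q_{j+1}>3q_j$, on disjoint tiny balls to avoid interference between stages — suffices, and the orbit is automatically dense.)

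\emph{The main obstacle} is precisely the perturbation in Step 3. One must force $g$ into near‑exact periodicity along a family of repetition chains that, because $K$ has positive measure (and, after the shift trick, nearly full measure), fills a large portion of $\Omega$, while keeping $g$ uniformly close to a prescribed $f_0$ that need not be locally constant anywhere; on a connected $\Omega$ the naive recipe of making $g$ locally constant at a fixed scale is unavailable, and the scale $\varepsilon$ cannot be decoupled from the size of $q_0$ via the repetition property, so the chains must be organized so that the perturbation genuinely localizes. Carrying this out — keeping the perturbation small while it is imposed along overlapping chains over a large‑measure region, and, if one wants the full Gordon property rather than a single block length, iterating it compatibly over a growing sequence $q_0<q_1<\cdots$ with $q_{j+1}>3q_j$ on a nested family $K_0\supseteq K_1\supseteq\cdots$ of positive‑measure sets so that distinct stages do not interfere — together with the two‑sided reduction of Step 1(b), is the technical core.
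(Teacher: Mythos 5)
There is a genuine gap, and you have located it yourself: the perturbation in Step~3 is never constructed, and it is precisely the step that carries all the content of the theorem. The difficulty you describe is real. If you impose near-constancy of $g$ along every chain $\{T^{i}\omega, T^{i+q_0}\omega, T^{i+2q_0}\omega\}$ for all $\omega$ in a set $K$ of positive (after the shift trick, nearly full) measure, these chains overlap as subsets of $\Omega$: a single point typically lies in, or arbitrarily close to, many chains belonging to different $\omega \in K$ at different offsets, and the constancy constraints then link up and propagate across unions of chains whose diameter is no longer controlled by $\varepsilon$. Nothing in your sketch prevents this, so the claim $\|g-f_0\|_\infty<\delta_0$ is unjustified as stated. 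Your framing of the target as $\bigcap_{m,N,\ell}\mathcal{O}_{m,N,\ell}$ with a single block length per open set is clean and correct (Steps~1 and~2 are fine, and this framing in fact disposes of your closing worry about iterating compatibly over a growing sequence of scales --- Baire's theorem does that for you), but the density of each $\mathcal{O}_{m,N,\ell}$ is exactly the theorem, and it is left unproved.

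The missing idea, which is how the paper resolves this, is a disjointification of the chains via the Rokhlin--Halmos lemma. One builds a Rokhlin tower $T^jO_i$, $1\le j\le m_i$, of height $m_i=(in_i)^2$ much larger than the repetition scale and of measure $>1-2^{-n_i-1}$, refines its base into pieces $S_{i,l}$ so small that every level $T^uS_{i,l}$ has diameter $<1/n_i$, and then scans up each column collecting runs of length $in$ attached to repetition times $n\in[n_{i-1},n_i)$ supplied by the set $\Omega_i$ of points with a quantitative repetition. Each run is regrouped into $n$ arithmetic progressions of $i$ tower levels with common difference $n$; the union of each progression is a set $C_{i,m}$ of diameter $<2/n_i+2^{-i}$, and --- crucially --- the $C_{i,m}$ are \emph{pairwise disjoint} because they are unions of distinct levels of the tower. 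One may therefore define $g$ to be constant on each $C_{i,m}$ independently (and close to $f_0$ there, by the diameter bound), with no propagation of constraints; this is a small perturbation of $f_0$ and puts $g$ in the relevant open set. The measure bookkeeping (losing the top $n_i$ levels, the complement of the tower, and the complement of $\Omega_i$) then feeds directly into Borel--Cantelli, replacing your ergodicity/shift argument. Without the tower, or some equivalent device making the repetition chains disjoint, the density step does not go through.
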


Naturally, the strongest repetition property, defined in terms of
the size of the set $PRP(\Omega,T)$, that can hold is the
following:

\begin{definition}
We say that $(\Omega,T)$ satisfies the global repetition property
{\rm (GRP)} if $PRP(\Omega,T) = \Omega$.
\end{definition}

It is an interesting question whether (GRP) implies a stronger
statement for the associated potentials. Namely, when (GRP) holds,
is it true that for generic continuous $f$, $V_\omega$ is a Gordon
potential for every $\omega \in \Omega$? We believe the answer is
no but have been unable to prove this.

Let us explore when (TRP), (MRP), and (GRP) hold for our two
classes of examples. It will turn out that, in these examples,
(TRP), (MRP), and (GRP) always hold or fail simultaneously.
Trivially, we have that (GRP) $\Rightarrow$ (MRP) $\Rightarrow$
(TRP), but one can ask whether some reverse implication holds in
general. This turns out not to be the case, see the discussion in
Subsection~\ref{ss.remarks}.

For minimal shifts of the form \eqref{qpmodels}, the situation is
particularly nice as the following result shows.

\begin{theorem}\label{shiftrpthm}
Every minimal shift $T \omega = \omega + \alpha$ on the torus
$\T^d$ satisfies {\rm (GRP)}, and hence also {\rm (TRP)} and {\rm
(MRP)}.
\end{theorem}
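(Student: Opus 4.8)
The plan is to exploit the single most useful structural fact available here: a shift is an isometry of the torus, so along any orbit the distance between two points depends only on how many steps separate them, not on where we are in the orbit or which orbit we picked. Concretely, I would first equip $\T^d$ with a translation-invariant metric (say the one induced by the sup-norm on $\R^d$); by the remark following the definition of the repetition property, the choice of metric is irrelevant. Then, for every $\omega \in \T^d$, every $k \ge 0$, and every $q \in \Z_+$,
$$
\mathrm{dist}(T^k\omega, T^{k+q}\omega) = \mathrm{dist}(\omega + k\alpha,\, \omega + k\alpha + q\alpha) = \mathrm{dist}(0, q\alpha),
$$
which does not depend on $\omega$ or on $k$.

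Given this, the verification of (GRP) is immediate in structure. Fix $\omega \in \T^d$ and let $\varepsilon > 0$ and $r > 0$ be given. Because the displayed distance is independent of $k$, the condition $\mathrm{dist}(T^k\omega, T^{k+q}\omega) < \varepsilon$ for all $k = 0,1,\ldots,\lfloor rq\rfloor$ holds for the whole range as soon as it holds for a single value of $k$; so the whole requirement collapses to finding one $q \in \Z_+$ with $\mathrm{dist}(0, q\alpha) < \varepsilon$, and the parameter $r$ never enters. Such a $q$ exists by an elementary pigeonhole argument: partition $\T^d$ into finitely many boxes each of diameter less than $\varepsilon$; among sufficiently many of the points $0, \alpha, 2\alpha, \ldots$, two lie in a common box, say $q_1\alpha$ and $q_2\alpha$ with $q_1 < q_2$, and then $q := q_2 - q_1 \in \Z_+$ satisfies $\mathrm{dist}(0, q\alpha) = \mathrm{dist}(q_1\alpha, q_2\alpha) < \varepsilon$. (One could equally invoke minimality: the forward $T$-orbit of $0$ is dense in $\T^d$, hence meets every neighborhood of $0$.) This shows the forward orbit of every $\omega \in \T^d$ has the repetition property, i.e., $PRP(\T^d, T) = \T^d$, which is exactly (GRP); the implications $\text{(GRP)} \Rightarrow \text{(MRP)} \Rightarrow \text{(TRP)}$ noted in the text then give the remaining two assertions.

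I do not expect any genuine obstacle in this proof. The only two points needing a moment of care are (i) ensuring the pigeonhole step produces a \emph{strictly positive} integer $q$ (handled by taking the difference of two indices), and (ii) recognizing the real content, namely that the seemingly strong demand of the repetition property — a long run of simultaneous near-coincidences $\mathrm{dist}(\omega_k, \omega_{k+q}) < \varepsilon$ for $0 \le k \le \lfloor rq\rfloor$ — costs nothing beyond a single near-return $\mathrm{dist}(0, q\alpha) < \varepsilon$, precisely because $T$ acts by an isometry. (Note also that minimality of the shift is not actually needed for the computation; it is only the standing hypothesis under which $(\T^d,T)$ is the natural model.)
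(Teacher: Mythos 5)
Your proof is correct and follows essentially the same route as the paper: the key point in both is that $T$ is an isometry, so a single near-return $\mathrm{dist}(0,q\alpha)<\varepsilon$ (which the paper extracts from density of the orbit of $0$ and you extract from pigeonhole) immediately gives the repetition property for every $\omega$ and every $r$. The paper's own remark after the proof makes exactly your observation that only the isometry property is used.
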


As a consequence, we find that for every minimal shift on $\T^d$ and
a generic function $f \in C(\T^d)$, the operator $H_\omega$ has
empty point spectrum for almost every $\omega$. This is especially
surprising if a coupling constant is introduced. Generally, one
expects pure point spectrum at large coupling, but our proof
excludes point spectrum for all values of the coupling constant at
once! This is discussed in more detail in
Subsection~\ref{couplingconstant}. We also want to mention that
recent results indicate that point spectrum should become more and
more prevalent as the dimension of the torus increases \cite{b3}.
Our result on the absence of point spectrum, on the other hand,
holds for all torus dimensions.

Let us now turn to minimal skew-shifts of the form
\eqref{ssmodels}. Recall that $\alpha \in \T$ is called badly
approximable if there is a constant $c > 0$ such that
$$
\langle \alpha q \rangle > \frac{c}{q}
$$
for every $q \in \Z \setminus \{0\}$. Here, we write $\langle x
\rangle = \mathrm{dist}_\T(x,0)$ ($= \min \{ |x - p| : p \in \Z
\}$, where $x$ denotes any representative in $\R$). The set of
badly approximable $\alpha$'s has zero Lebesgue measure; see, for
example, \cite[Theorem~29 on p.~60]{khin}. In terms of the
continued fraction expansion of $\alpha$ (cf.~\cite{khin}), being
badly approximable is equivalent to having bounded partial
quotients.

\begin{theorem}\label{sshiftrpthm}
For a minimal skew-shift $T(\omega_1,\omega_2) = (\omega_1 +
2\alpha , \omega_1 + \omega_2)$ on the torus $\T^2$, the following
are equivalent:
\begin{itemize}

\item[{\rm (i)}] $\alpha$ is not badly approximable.

\item[{\rm (ii)}] $(\Omega,T)$ satisfies {\rm (GRP)}.

\item[{\rm (iii)}] $(\Omega,T,\mathrm{Leb})$ satisfies {\rm
(MRP)}.

\item[{\rm (iv)}] $(\Omega,T)$ satisfies {\rm (TRP)}.

\end{itemize}
\end{theorem}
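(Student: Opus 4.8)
The plan is as follows. The implications (ii) $\Rightarrow$ (iii) $\Rightarrow$ (iv) are immediate from the trivial chain (GRP) $\Rightarrow$ (MRP) $\Rightarrow$ (TRP) recorded above (for the torus, $\mathrm{Leb}$ is the relevant ergodic measure), so the content lies in (iv) $\Rightarrow$ (i) and (i) $\Rightarrow$ (ii); note also that minimality of the skew-shift forces $\alpha$ irrational, so the continued fraction machinery applies. First I would record the explicit iterates: a direct induction gives
$$
T^n(\omega_1,\omega_2) = \bigl(\omega_1 + 2n\alpha,\ \omega_2 + n\omega_1 + \alpha n(n-1)\bigr),
$$
so that for $k,q \ge 0$ the two coordinates of $T^{k+q}\omega - T^k\omega$ are $2q\alpha$ and
$$
x_k := q\omega_1 + \alpha q(q-1) + 2\alpha q k .
$$
Hence $\mathrm{dist}(T^k\omega, T^{k+q}\omega)$ is controlled by $\langle 2q\alpha\rangle$ and $\langle x_k\rangle$; the coordinate $\omega_2$ drops out entirely, membership of $\omega$ in $PRP(\Omega,T)$ depends only on $\omega_1$, and for fixed $q$ the points $\{x_k\}_{k\ge 0}$ form an arithmetic progression with step $\beta := 2\alpha q$. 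The repetition condition for a period $q$ thus says that $x_0,x_1,\dots,x_{\lfloor rq\rfloor}$ all lie within $\varepsilon$ of $0$ modulo $1$, together with $\langle 2q\alpha\rangle < \varepsilon$.

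For (iv) $\Rightarrow$ (i) I would prove the contrapositive. Suppose $\alpha$ is badly approximable with constant $c$, so $\langle\beta\rangle = \langle 2\alpha q\rangle > c/(2q)$ for all $q \ge 1$; I claim $PRP(\Omega,T) = \emptyset$. Fix any $\omega$ and take, say, $r := 2$ and $\varepsilon := \min(c/4, 1/7)$. If some $q$ witnessed the repetition property for this pair, then $\langle x_0\rangle,\langle x_1\rangle < \varepsilon$ would force $\langle\beta\rangle = \langle x_1 - x_0\rangle < 2\varepsilon < 1/3$; writing $\tilde x_0,\tilde\beta$ for the representatives in $(-1/2,1/2]$, one checks inductively that the progression does not wrap around, so $|\tilde x_0 + k\tilde\beta| = \langle x_k\rangle < \varepsilon$ for all $k \le \lfloor rq\rfloor = 2q$, whence $2q\,|\tilde\beta| \le |\tilde x_0 + 2q\tilde\beta| + |\tilde x_0| < 2\varepsilon$. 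But $|\tilde\beta| = \langle 2\alpha q\rangle > c/(2q)$ gives $2q\,|\tilde\beta| > c \ge 4\varepsilon$, a contradiction. So no period works, $\omega \notin PRP(\Omega,T)$, and as $\omega$ was arbitrary, (TRP) fails.

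For (i) $\Rightarrow$ (ii), suppose $\alpha$ is not badly approximable, equivalently its continued fraction has unbounded partial quotients $a_j$; let $b_j$ be the convergent denominators, so $\langle \alpha b_j\rangle < 1/b_{j+1}$ and $b_{j+1} > a_{j+1}b_j$. Fix $\omega$ and $\varepsilon,r>0$; I must produce arbitrarily large $q$ meeting the repetition condition. The idea is to take $q = m\,b_j$, where $j$ runs over the subsequence with $a_{j+1}\to\infty$ and $m$ ranges over a window $1 \le m \le M_j$ with $M_j$ of order $\sqrt{a_{j+1}\varepsilon}$ (the implied constant depending on $r$), so $M_j \to \infty$. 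Using only $\langle k\gamma\rangle \le k\langle\gamma\rangle$ and the convergent estimates, for such $q = mb_j$ one has
$$
\langle 2\alpha q\rangle \le \frac{2m}{a_{j+1}b_j}, \qquad
\langle \alpha q(q-1)\rangle \le \langle \alpha m^2 b_j^2\rangle + \langle \alpha m b_j\rangle \le \frac{m^2 + m}{a_{j+1}},
$$
and by the choice of $M_j$ both $\lfloor rq\rfloor\langle 2\alpha q\rangle$ and $\langle\alpha q(q-1)\rangle$ are $< \varepsilon/4$ for every $m \le M_j$. It remains to handle the term $\langle q\omega_1\rangle = \langle m b_j\omega_1\rangle$: by pigeonhole applied to $0,\,b_j\omega_1,\,2b_j\omega_1,\dots,M_j b_j\omega_1$ there is $m = m_j \in \{1,\dots,M_j\}$ with $\langle m_j b_j\omega_1\rangle < 1/M_j$, which is $< \varepsilon/4$ once $j$ is large. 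Setting $q := m_j b_j$ then gives $\langle x_0\rangle \le \langle q\omega_1\rangle + \langle \alpha q(q-1)\rangle < \varepsilon/2$, hence $\langle x_k\rangle \le \langle x_0\rangle + \lfloor rq\rfloor\langle 2\alpha q\rangle < \varepsilon$ for all $0 \le k \le \lfloor rq\rfloor$, and also $\langle 2q\alpha\rangle < \varepsilon$; since $q = m_j b_j \ge b_j \to \infty$, this shows every $\omega$ lies in $PRP(\Omega,T)$, i.e., (GRP) holds.

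The main obstacle is the step (i) $\Rightarrow$ (ii), and within it the appearance of both the quadratic term and the coordinate $\omega_1$ in $x_0 = q\omega_1 + \alpha q(q-1)$: a fixed sequence of ``good'' periods $q$ (those with small drift $\langle 2\alpha q\rangle$) cannot make $\langle q\omega_1\rangle$ small for every $\omega_1$, so one is forced to open up an entire window $\{mb_j : 1 \le m \le M_j\}$ of admissible periods and spend it on a pigeonhole argument, while verifying that enlarging the window does not ruin the bounds on $\langle 2\alpha q\rangle$ or on $\langle\alpha q(q-1)\rangle$ — precisely the trade-off encoded in taking $M_j$ of order $\sqrt{a_{j+1}\varepsilon}$. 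The remaining pieces (the two trivial implications and the drift estimate for (iv) $\Rightarrow$ (i)) are routine once the reduction in the first paragraph is in place.
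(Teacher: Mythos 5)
Your proposal is correct and follows essentially the same route as the paper: the same explicit iteration formula reducing everything to an arithmetic progression with common difference $2q\alpha$, the same pigeonhole-over-a-window-of-multiples trick to kill the $\langle q\omega_1\rangle$ term in (i) $\Rightarrow$ (ii) (the paper uses a fixed window of size $\lfloor\varepsilon^{-1}\rfloor+1$ together with $q_k\langle\alpha q_k\rangle\to 0$, where you use a window growing like $\sqrt{a_{j+1}\varepsilon}$ — an immaterial difference), and the same non-wrapping drift argument for (iv) $\Rightarrow$ (i), which you phrase contrapositively and the paper phrases directly.
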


Thus, for Lebesgue almost every $\alpha$, the operator $H_\omega$
generated by the corresponding skew-shift and a generic function
$f \in C(\T^2)$ has empty point spectrum for Lebesgue almost every
$\omega$. This is surprising given that the expected spectral type
for operators generated by the skew shift is pure point. Again,
one can introduce a coupling constant and absence of point
spectrum then holds for all values of the coupling constant
simultaneously. Let us also emphasize that, to the best of our
knowledge, our result provides the first examples of Schr\"odinger
operators with potentials defined by a skew-shift that have empty
point spectrum. In particular, the expected localization result
for such operators will need a suitable regularity assumption for
the sampling function.

The paper is organized as follows. In Section~\ref{secrpgp}, we
establish the relation between the topological or metric
repetition property for the underlying dynamical system and the
Gordon property for the associated potentials when a generic
continuous sampling function is chosen; that is, we prove
Theorems~\ref{rpthm} and \ref{srpthm}. The validity of the
topological, metric, or global repetition property for the two
classes of examples is then explored in Section~\ref{secsss},
where we prove Theorems~\ref{shiftrpthm} and \ref{sshiftrpthm}. We
conclude the paper with some further results and comments in
Section~\ref{secfrc}.

\section{Repetition Properties and Gordon
Potentials}\label{secrpgp}

In this section we prove Theorems~\ref{rpthm} and \ref{srpthm}.

\begin{proof}[Proof of Theorem~\ref{rpthm}.]
By assumption, there is a point $\omega \in \Omega$ whose forward
orbit has the repetition property. For each $k \in \Z_+$, consider
$\varepsilon = \frac{1}{k}$, $r = 3$, and the associated $q_k =
q(\varepsilon,r)$. This ensures
$$
q_k \to \infty.
$$
Take an open ball $B_k$ around $\omega$ with radius small enough so
that
$$
\overline{T^n(B_k)} , \; 1 \le n \le 4q_k
$$
are disjoint and, for every $1 \le j \le q_k$,
$$
\bigcup_{l = 0}^3 T^{j+lq_k} (B_k)
$$
is contained in some ball of radius $4\varepsilon$. Define
$$
\mathcal{C}_k = \left\{ f \in C(\Omega) : f \text{ is constant on
each set } \bigcup_{l = 0}^3 T^{j+lq} (B_k) , \; 1 \le j \le q_k
\right\}
$$
and let $\mathcal{F}_k$ be the open $k^{-q_k}$ neighborhood of
$\mathcal{C}_k$ in $C(\Omega)$. Notice that for each $m$,
$$
\bigcup_{k \ge m} \mathcal{F}_k
$$
is an open and dense subset of $C(\Omega)$. This follows since every
$f \in C(\Omega)$ is uniformly continuous and the diameter of the
set $\bigcup_{l = 0}^3 T^{j+lq_k} (B_k)$ goes to zero, uniformly in
$j$, as $k \to \infty$. Thus,
$$
\mathcal{F} = \bigcap_{m \ge 1} \bigcup_{k \ge m} \mathcal{F}_k
$$
is a dense $G_\delta$ subset of $C(\Omega)$.

Consider some $f \in \mathcal{F}$. Then, $f \in \mathcal{F}_{k_l}$
for some sequence $k_l \to \infty$. Observe that for every $m \ge
1$,
$$
\bigcup_{l \ge m} \bigcup_{j = 1}^{q_{k_l}} T^{j + q_{k_l}}(B_{k_l})
$$
is an open and dense subset of $\Omega$ since $T$ is minimal and
$q_{k_l} \to \infty$. Thus,
$$
\Omega_f = \bigcap_{m \ge 1} \bigcup_{l \ge m} \bigcup_{j =
1}^{q_{k_l}} T^{j + q_{k_l}}(B_{k_l})
$$
is a dense $G_\delta$ subset of $\Omega$.

It now readily follows that for every $f \in \mathcal{F}$ and
$\omega \in \Omega_f$, $V_\omega$ is a Gordon potential. Explicitly,
since $\omega \in \Omega_f$, $\omega$ belongs to $\bigcup_{j =
1}^{q_{k_l}} T^{j + q_{k_l}}(B_{k_l})$ for infinitely many $l$. For
each such $l$, we have by construction that
$$
\max_{1 \le j \le q_{k_l}} |f(T^{j} \omega) - f(T^{j + q_{k_l}}
\omega)| < 2 k_l^{-q_{k_l}}
$$
and
$$
\max_{1 \le j \le q_{k_l}} |f(T^{j} \omega) - f(T^{j - q_{k_l}}
\omega)| < 2 k_l^{-q_{k_l}}
$$
This shows that $V_\omega(n) = f(T^n \omega)$ is a Gordon potential.
\end{proof}

\begin{proof}[Proof of Theorem~\ref{srpthm}.]
Notice that, by ergodicity, the assumption $\mu(PRP(\Omega,T)) > 0$
implies $\mu(PRP(\Omega,T)) = 1$.

Let us inductively define a sequence of positive integers,
$\{n_i\}_{i \ge 1}$, and a sequence of subsets of $\Omega$, $\{
\Omega_i \}_{i \ge 1}$.

Since $\mu(PRP(\Omega,T)) = 1$ we can choose $n_1 \in \Z_+$ large
enough so that\footnote{The definition of $\Omega_1$ contains
redundancies whose purpose is to motivate the definition of the
subsequent sets $\Omega_i$.}
\begin{align*}
\Omega_1 = \Big\{ \omega \in \Omega : & \text{ there exists } n
\in [1,n_1) \text{ such that for } k_1 , k_2 \in [1, n] \\
& \text{ with } k_1 \equiv k_2 \!\!\!\!\! \mod n, \text{ we have }
\mathrm{dist}(T^{k_1} \omega , T^{k_2} \omega) < 1  \Big\}
\end{align*}
obeys
$$
\mu (\Omega_1) > 1 - 2^{-1}.
$$
Once $n_{i-1}$ and $\Omega_{i-1}$ have been determined, we can
define $n_i$ and $\Omega_i$ as follows. It is possible to find $n_i
> \max \{ n_{i-1} , 2^i \}$ such that
\begin{align*}
\Omega_i = \Big\{ \omega \in \Omega : & \text{ there exists } n
\in [n_{i-1},
n_i) \text{ such that for } k_1 , k_2 \in [1, i n] \\
&  \text{ with } k_1 \equiv k_2 \!\!\!\!\! \mod n, \text{ we have
} \mathrm{dist}(T^{k_1} \omega , T^{k_2} \omega) < 2^{-i} \Big\}
\end{align*}
obeys
\begin{equation}\label{mesest1}
\mu(\Omega_i) > 1 - 2^{-i}.
\end{equation}

For $i \ge 5$, we set $m_i = (i n_i)^2$ and choose (using the
Rokhlin-Halmos Lemma \cite[Theorem~1 on p.~242]{cfs}) $O_i \subset
\Omega$ in a way that $T^j O_i$, $1 \le j \le m_i$ are disjoint and
$$
\mu \left( \bigcup_{j = 1}^{m_i} T^j O_i \right)
> 1 - 2^{-n_i - 1}.
$$
Next we further partition $O_i$ into sets $S_{i,l}$, $1 \le l \le
s_i$ such that for every $0 \le u \le m_i$,
\begin{equation}\label{diamest}
\mathrm{diam}(T^u S_{i,l}) < \frac{1}{n_i}.
\end{equation}
Choose $K_{i,l} \subseteq S_{i,l}$ compact with
$$
\mu(K_{i,l}) > \mu(S_{i,l}) \left( 1 - 2^{-n_i - 1} \right).
$$
Then, $T^u K_{i,l}$,  $0 \le u \le m_i$, $1 \le l \le s_i$ are
disjoint and their total measure is
\begin{equation}\label{mesest2}
\mu \left( \bigcup_{u,l}  T^u K_{i,l} \right) \ge (1 - 2^{-n_i -
1})^2 > 1 - 2^{-n_i}.
\end{equation}

We will now collect a large subfamily of $\{T^u K_{i,l}\}$. For
each $l$, we let $u$ run from $0$ upwards and ask for the
corresponding $T^u K_{i,l}$ whether it has non-empty intersection
with $\Omega_i$. If it does, then there is a point $\omega$ and a
corresponding $n \in (n_{i-1},n_i)$. We add the current $T^u
K_{i,l}$ to the subfamily we construct, along with the sets
$T^{u+h} K_{i,l}$, $1 \le h \le in$. Then we continue with
$T^{u+in+1} K_{i,l}$ and do the same. We stop when we are within
$n_i$ steps of the top of the tower. Let us denote the subfamily
so constructed by $\mathcal{K}_i$. By \eqref{mesest1},
\eqref{mesest2}, and $T$-invariance of $\mu$, we have that
\begin{equation}\label{mesest3}
\mu \left( \bigcup_{T^u K_{i,l} \in \mathcal{K}_i} T^u K_{i,l}
\right) > 1 - 2^{-n_i} - 2^{-i} - \frac{n_i}{m_i}.
\end{equation}

The next step is to group each of these ``runs'' into arithmetic
progressions. Notice that locally we have $n$ consecutive points
that are $(i-1)$ times repeated up to some small error. Notice that
this extends to the entire set if we make the allowed error a bit
larger. Let us group these $in$ sets into $n$ arithmetic
progressions of length $i$. The union of each of these arithmetic
progressions of sets will constitute a new set $C_{i,m}$. By the
definition of $\Omega_i$ and \eqref{diamest}, we have
\begin{equation}\label{diamest2}
\mathrm{diam}(C_{i,m}) < \frac{2}{n_i} + 2^{-i}.
\end{equation}
We will also consider the sets $\tilde C_{i,m}$ that are defined
similarly, but with the first and the last set in the corresponding
sequence of $i$ sets deleted.

We can now continue as before. Define
$$
F_i = \{ f \in C(\Omega) : f \text{ is constant on each set }
C_{i,m} \}
$$
and let $\mathcal{F}_i$ be the $i^{-n_i}$ neighborhood of $F_i$ in
$C(\Omega)$. Notice that for each $m$,
$$
\bigcup_{i \ge m} \mathcal{F}_i
$$
is an open and dense subset of $C(\Omega)$. This follows from
\eqref{mesest3} and \eqref{diamest2} since every $f \in C(\Omega)$
is uniformly continuous. Thus,
$$
\mathcal{F} = \bigcap_{m \ge 1} \bigcup_{i \ge m} \mathcal{F}_i
$$
is a dense $G_\delta$ subset of $C(\Omega)$.

If $f \in \mathcal{F}$, there is a sequence $i_k \to \infty$ such
that $f \in \mathcal{F}_{i_k}$. For each $k$, $f$ is within
$i_k^{-n_{i_k}}$ of being constant on each set $C_{i_k,m}$. Recall
that this set is the union of $i$ sets in arithmetic progression
relative to $T$.

If we instead consider $\tilde C_{i_k,m}$, we can go forward and
backward one period and hence, by construction, this is exactly the
Gordon condition at this level. Thus, it only remains to show that
almost every $\omega \in \Omega$ belongs to infinitely many $\tilde
C_{i_k,m}$. This, however, follows from the measure estimates
obtained above and the Borel-Cantelli Lemma.
\end{proof}

\section{Repetition Properties for Shifts and
Skew-Shifts}\label{secsss}

In this section we consider minimal shifts and skew-shifts and
identify those cases that obey (TRP), (MRP), and (GRP). That is,
we prove Theorems~\ref{shiftrpthm} and \ref{sshiftrpthm}.

\begin{proof}[Proof of Theorem~\ref{shiftrpthm}.]
By assumption, the orbit of $0 \in \T^d$ is dense. In particular,
we can define $q_k \to \infty$ such that $T^{q_k}(0)$ is closer to
$0$ than any point $T^n(0)$, $1 \le n < q_k$. In particular, for
every $\varepsilon > 0$ and every $r > 0$, there is
$k(\varepsilon,r)$ such that for $k \ge k(\varepsilon,r)$,
$T^{q_k}$ is a shift on $\T^d$ with a shift vector of length
bounded by $\varepsilon$. The repetition property now follows for
the forward orbit of any choice of $\omega \in \T^d$. Thus, (GRP)
is satisfied.
\end{proof}

\noindent\textit{Remark.} The proof only used that the shift on
the torus is an isometry. Thus, the result extends immediately to
any minimal isometry of a compact metric space.

\begin{proof}[Proof of Theorem~\ref{sshiftrpthm}.]
Iterating the skew-shift $n$ times, we find
$$
T^n (\omega_1,\omega_2) = ( \omega_1 + 2 n \alpha , \omega_2 + 2 n
\omega_1 + n(n-1)\alpha ).
$$
Thus,
\begin{equation}\label{skewshiftdiff}
T^{n+q} (\omega_1 ,\omega_2) - T^n (\omega_1,\omega_2) = (
2q\alpha , 2q \omega_1 + q^2 \alpha + 2nq \alpha - q \alpha ).
\end{equation}

\noindent\underline{(i) $\Rightarrow$ (ii)}: Assume that $\alpha$
is not badly approximable. This means that there is some sequence
$q_k \to \infty$, such that
\begin{equation}\label{qkchoice}
\lim_{k \to \infty} q_k \langle \alpha q_k \rangle = 0.
\end{equation}

Let $(\omega_1,\omega_2) \in \T^2$, $\varepsilon > 0$, and $r > 0$
be given. We will construct a sequence $\tilde q_k \to \infty$ so
that for $1 \le n \le r \tilde q_k$,
\begin{equation}\label{diffexp}
( 2 \tilde q_k \alpha , 2 \tilde q_k \omega_1 + \tilde q_k^2
\alpha + 2 n \tilde q_k \alpha - \tilde q_k \alpha )
\end{equation}
is of size $O(\varepsilon)$. Each $\tilde q_k$ will be of the form
$m_k q_k$ for some $m_k \in \{ 1 , 2 , \ldots , \lfloor
\varepsilon^{-1} \rfloor + 1 \}$.

It follows from \eqref{qkchoice} that in \eqref{diffexp}, every
term except $2 \tilde q_k \omega_1$ goes to zero as $k \to
\infty$, regardless of the choice of $m_k$, and hence is less than
$\varepsilon$ for $k$ large enough. To treat the remaining term,
we can just choose $m_k$ in the specified $\varepsilon$-dependent
range so that $2 \tilde q_k \omega_1 = m_k (2 q_k \omega_1)$ is of
size less than $\varepsilon$ as well. Consequently, by
\eqref{skewshiftdiff}, the orbit of $(\omega_1,\omega_2)$ has the
repetition property. Since $(\omega_1,\omega_2)$ was arbitrary, it
follows that (GRP) holds.
\\[3mm]
\underline{(ii) $\Rightarrow$ (iii)}: This is immediate.
\\[3mm]
\underline{(iii) $\Rightarrow$ (iv)}: This is immediate.
\\[3mm]
\underline{(iv) $\Rightarrow$ (i)}: Assuming (TRP), we see that
there is a point $\omega$ such that $\{ T^n \omega \}_{n \ge 0}$
has the repetition property. In particular, by
\eqref{skewshiftdiff}, we see that for every $\varepsilon > 0$,
there are $q_k \to \infty$ so that
\begin{equation}\label{pf3.1}
\langle 2q_1 \omega_1 + q_k^2 \alpha + 2nq_k \alpha - q_k \alpha
\rangle < \varepsilon \text{ for } 0 \le n \le q_k.
\end{equation}
Evaluating this for $n = 0$, we find that $\langle 2q_1 \omega_1 +
q_k^2 \alpha  - q_k \alpha \rangle < \varepsilon$. Now vary $n$.
Each time we increase $n$, we shift in the same direction by
$\langle 2 q_k \alpha \rangle$. If $\varepsilon > 0$ is
sufficiently small, it follows from the estimate \eqref{pf3.1}
that we cannot go around the circle completely and hence we have
$\langle 2nq_k \alpha \rangle = n \langle 2q_k \alpha \rangle$ for
every $0 \le n \le q_k$. We find that $\langle \alpha q_k \rangle
\lesssim \frac{\varepsilon}{q_k}$, which shows that $\alpha$ is
not badly approximable.
\end{proof}

\section{Further Results and Comments}\label{secfrc}

\subsection{Interval Exchange Transformations}

Of course, it is interesting to explore the validity of the
various repetition properties for other underlying dynamical
systems. In this subsection we will briefly discuss interval
exchange transformations as these dynamical systems have been
studied in the context of ergodic Schr\"odinger operators before;
see \cite{d2} for references.

An interval exchange transformation is defined as follows. Let $m >
1$ be a fixed integer and denote
$$
\Lambda_m = \{ \lambda \in \R^m : \lambda_j > 0 , \; 1 \le j \le m
\}
$$
and, for $\lambda \in \Lambda_m$,
\begin{align*}
\beta_j(\lambda) & = \begin{cases} 0 & j = 0 \\ \sum_{i=1}^j
\lambda_i & 1 \le j \le m \end{cases} \\
I^\lambda_j & = [\beta_{j-1}(\lambda),\beta_j(\lambda)) \\
|\lambda| & = \sum_{i = 1}^m \lambda_i \\
I^\lambda & = [0,|\lambda|).
\end{align*}
Denote by $\mathcal{S}_m$ the group of permutations on
$\{1,\ldots,m\}$, and set $\lambda_j^\pi = \lambda_{\pi^{-1}(j)}$
for $\lambda \in \Lambda_m$ and $\pi \in \mathcal{S}_m$. With these
definitions, the $(\lambda,\pi)$-interval exchange map
$T_{\lambda,\pi}$ is given by
$$
T_{\lambda,\pi} : I^\lambda \to I^\lambda , \; x \mapsto x -
\beta_{j-1}(\lambda) + \beta_{\pi(j) -1}(\lambda^\pi) \text{ for } x
\in I_j^\lambda , \; 1 \le j \le m.
$$
A permutation $\pi \in \mathcal{S}_m$ is called irreducible if
$\pi(\{ 1,\ldots ,k\}) = \{ 1,\ldots ,k\}$ implies $k = m$. We
denote the set of irreducible permutations by $\mathcal{S}_m^0$.

Let $\pi \in \mathcal{S}_m^0$. Then, for Lebesgue almost every
$\lambda \in \Lambda_m$, $(I^\lambda , T_{\lambda,\pi})$ is
strictly ergodic. The minimality statement follows from Keane's
work \cite{k}. The unique ergodicity statement was shown by Masur
\cite{m} and Veech \cite{v2}; for a simpler proof of this result,
see \cite{bosh3, bosh2, v3}. When unique ergodicity holds, it is
clear that the unique $T_{\lambda,\pi}$-invariant Borel
probability measure must be given by normalized Lebesgue measure
on $I^\lambda$, denoted by $\mathrm{Leb}$.

It is therefore natural to ask whether $(I^\lambda ,
T_{\lambda,\pi}, \mathrm{Leb})$ satisfies (MRP). We have the
following result:

\begin{theorem}\label{ietthm}
Let $\pi \in \mathcal{S}_m^0$. Then for Lebesgue almost every
$\lambda \in \Lambda_m$, $(I^\lambda , T_{\lambda,\pi})$ is
strictly ergodic and $(I^\lambda , T_{\lambda,\pi}, \mathrm{Leb})$
satisfies {\rm (MRP)}.
\end{theorem}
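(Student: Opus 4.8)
The plan is to verify the metric repetition property for interval exchange transformations by exploiting the self-similar (renormalization) structure provided by the Rauzy--Veech induction, which is the standard tool for establishing fine recurrence properties of almost every IET. The strict ergodicity statement is already available from the work of Keane, Masur, and Veech cited above, so the real content is producing, for almost every $\lambda$, a positive-measure set of points $x \in I^\lambda$ whose forward orbit has the repetition property. Since the unique invariant measure is Lebesgue, (MRP) amounts to showing $\mathrm{Leb}(PRP(I^\lambda,T_{\lambda,\pi})) > 0$; by the fact (noted in the excerpt) that $PRP$ is a $G_\delta$ set that is dense $G_\delta$ in the orbit closure of any of its points, once it is nonempty it is automatically residual, but residual sets can be Lebesgue null, so the measure-theoretic lower bound must be obtained by a quantitative argument.

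First I would recall that under Rauzy--Veech induction the IET $(I^\lambda,\pi)$ is renormalized to an IET on a shorter interval, and after finitely many steps (for a.e.\ $\lambda$, infinitely often) one obtains a Rokhlin tower decomposition of $I^\lambda$: the interval is partitioned into $m$ castles, the $j$-th of which is a stack of $h_j^{(n)}$ copies of a small subinterval $I^{(n)}_j$ under iteration of $T_{\lambda,\pi}$, with $\max_j |I^{(n)}_j| \to 0$. The key point is that the return time of $I^{(n)}_j$ to the base of its own tower is exactly $h_j^{(n)}$ for the bottom portion of the tower, so for a point $x$ in the bottom $(1 - \delta)$-fraction of the $j$-th tower and for $q = h_j^{(n)}$, one has $\mathrm{dist}(T^k x, T^{k+q} x)$ small — essentially bounded by $\mathrm{diam}(I^{(n)}_j)$ — for all $k$ from $0$ up to a definite fraction of $q$. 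To get the repetition property one needs this for $k$ up to $\lfloor r q \rfloor$ with $r$ arbitrarily large, which is the point where one needs not a single tower but a concatenation: one uses the fact that for a.e.\ $\lambda$ the induction is recurrent to a fixed (say, "balanced") set of combinatorial data infinitely often, so that the tower heights satisfy $h_j^{(n)} \asymp h_{j'}^{(n)}$ uniformly and the stopping times $q_k$ can be chosen so that a given point lies deep enough in a tall tower. A Borel--Cantelli argument over the induction steps, using that the measure of the "good" part of each tower decomposition is bounded below, then produces a positive-measure (indeed full-measure, or at least positive) set of $x$ satisfying the repetition property for every $\varepsilon$ and $r$.

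Concretely, the steps I would carry out are: (1) invoke Keane/Masur/Veech for strict ergodicity of $(I^\lambda,T_{\lambda,\pi})$ for a.e.\ $\lambda$; (2) fix, via Veech's theorem on recurrence of the Rauzy--Veech (or Zorich) renormalization, a subset of $\Lambda_m$ of full measure along whose orbits the induction returns infinitely often to a compact set where tower heights are comparable and base intervals shrink; (3) for each such $\lambda$ and each induction time $n$ in this recurrent subsequence, write down the Rokhlin tower decomposition and identify, for each target pair $(\varepsilon, r)$, an induction level $n$ and a sublevel of each tower (the bottom $1 - \varepsilon$ portion, say) on which $q = h_j^{(n)}$ serves as an approximate period valid for $k \le rq$, with error controlled by $\mathrm{diam}$ of the base intervals; (4) estimate the measure of the set of $x$ for which this works for all $(\varepsilon,r)$ from a countable dense family, using $T$-invariance of Lebesgue and Borel--Cantelli, to conclude $\mathrm{Leb}(PRP) > 0$. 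The main obstacle is step (3)–(4): a single Rokhlin tower only gives the repetition property for $k$ up to a bounded multiple of $q$, so one genuinely needs the renormalization dynamics — the nesting of towers across successive induction steps, together with the balance estimates that hold on a recurrent compact set — to push $r$ to infinity while keeping a uniform positive measure lower bound; making the quantitative bookkeeping of errors across the nested tower levels consistent (so that the errors from all scales sum to $O(\varepsilon)$) is where the care is required, and it is exactly the place where the hypothesis "Lebesgue almost every $\lambda$" rather than "every $\lambda$" becomes essential.
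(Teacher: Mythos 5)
Your overall architecture (strict ergodicity from Keane--Masur--Veech, a renormalization-based tower decomposition, a measure estimate for the set of points with an approximate period $q$ valid up to time $rq$, and a Borel--Cantelli/countable-intersection step over a countable family of pairs $(\varepsilon,r)$) is the same as the paper's. But there is a genuine gap at the central step, and it is exactly the point your proof leans on. You assert that ``the return time of $I^{(n)}_j$ to the base of its own tower is exactly $h_j^{(n)}$,'' so that for $x$ in the $j$-th tower and $q = h_j^{(n)}$ one gets $\mathrm{dist}(T^k x, T^{k+q}x) \lesssim \mathrm{diam}(I^{(n)}_j)$. This is false: $h_j^{(n)}$ is the return time of $I^{(n)}_j$ to the union of the bases $I^{(n)} = \bigcup_{j'} I^{(n)}_{j'}$, and the induced map generically sends $I^{(n)}_j$ into other bases $I^{(n)}_{j'}$, $j' \neq j$. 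So for a base point $y \in I^{(n)}_j$, the point $T^{h_j^{(n)}}y$ need not be anywhere near $y$, and a single Rokhlin tower by itself yields no repetition estimate at all --- not even for $k$ up to a bounded multiple of $q$, contrary to what you state. The missing ingredient is precisely a statement that some base interval returns mostly \emph{to itself} after exactly its height: an interval $J$ and an integer $q$ with $T^l J$, $0 \le l < q$, disjoint, filling the space up to measure $\varepsilon$, with $T$ linear on each floor and, crucially, $\mathrm{Leb}(J \cap T^q J) > (1-\varepsilon)\mathrm{Leb}(J)$. This self-return property is not automatic from balance or recurrence of the Rauzy--Veech cocycle to a compact set; it is a theorem of Veech (\cite[Theorem~1.4]{v}), valid for almost every $\lambda$ (and genuinely failing in general, which is consistent with the fact that the symbolic codings of badly approximable rotations do not satisfy (MRP)).

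The paper's proof simply quotes that result (Theorem~\ref{veechthm}) and then runs the easy part of your argument: for $y \in J$ with $T^{aq}y \in J$ for $a = 0,1,\ldots,\lceil r\rceil+1$, linearity on the floors gives $|T^k x - T^{k+q}x| = |T^{aq}y - T^{(a+1)q}y| \le \mathrm{diam}(J)$ for all $k \le rq$, and item (iv) plus $T$-invariance of Lebesgue measure bounds the exceptional set by $O(r\varepsilon)$; intersecting over $\varepsilon = 2^{-j}$ for each fixed $r$ and then over $r$ gives a full-measure subset of $PRP(I^\lambda, T_{\lambda,\pi})$. So to repair your write-up you should either cite Veech's metric theorem as the source of the self-return tower, or supply a proof of it --- which is a substantial piece of the metric theory of the Rauzy--Veech induction, not a bookkeeping issue about summing errors over nested scales.
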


The proof of this theorem will rely on the following result due to
Veech; see \cite[Theorem~1.4]{v}.

\begin{theorem}[Veech 1984]\label{veechthm}
Let $\pi \in \mathcal{S}_m^0$. For Lebesgue almost every $\lambda
\in \Lambda_m$ and every $\varepsilon > 0$, there are $q \ge 1$
and an interval $J \subseteq I^\lambda$ such that
\begin{itemize}

\item[(i)] $J \cap T_{\lambda,\pi}^l J = \emptyset$, $1 \le l <
q$,

\item[(ii)] $T_{\lambda,\pi}$ is linear on $T_{\lambda,\pi}^l J$,
$0 \le l < q$,

\item[(iii)] $\mathrm{Leb}( \bigcup_{l = 0}^{q-1}
T_{\lambda,\pi}^l J ) > 1 - \varepsilon$,

\item[(iv)] $\mathrm{Leb}( J \cap T_{\lambda,\pi}^q J ) > (1 -
\varepsilon) \mathrm{Leb}(J)$.

\end{itemize}
\end{theorem}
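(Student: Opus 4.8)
The plan is to treat the two conclusions separately. The strict ergodicity statement is, for Lebesgue almost every $\lambda \in \Lambda_m$, immediate from the results recalled above (minimality by Keane \cite{k}, unique ergodicity by Masur \cite{m} and Veech \cite{v2}). Intersecting that full-measure set of $\lambda$ with the full-measure set on which Veech's Theorem~\ref{veechthm} applies, it remains to show that every such $\lambda$ also satisfies (MRP). Throughout, write $T = T_{\lambda,\pi}$, and -- as is standard for interval exchange transformations in this context (cf.\ \cite{d2}) -- work on the full-measure $T$-invariant subset of $I^\lambda$ on which the two-sided orbit is defined, so that $T$ is there a homeomorphism onto its (dense) range and $\mathrm{dist}$ is the Euclidean metric. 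Since (MRP) only asks for $\mathrm{Leb}(PRP(I^\lambda,T)) > 0$, I will prove the stronger equality $\mathrm{Leb}(PRP(I^\lambda,T)) = 1$. Because $PRP(I^\lambda,T) = \bigcap_{(\varepsilon,r)\in\Q_{>0}^2} A_{\varepsilon,r}$, where $A_{\varepsilon,r}$ is the set of $\omega$ for which some $q \ge 1$ satisfies $\mathrm{dist}(T^k\omega, T^{k+q}\omega) < \varepsilon$ for all $0 \le k \le \lfloor rq\rfloor$, it suffices to show each $A_{\varepsilon,r}$ has full measure.

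So fix rationals $\varepsilon, r > 0$ and an auxiliary $\delta > 0$, set $N = \lceil r\rceil + 2$, and apply Theorem~\ref{veechthm} with $\varepsilon' := \min\{\varepsilon,\ \delta/(N^2+1)\}$, obtaining $q \ge 1$ and an interval $J \subseteq I^\lambda$ satisfying (i)--(iv). The Rokhlin tower with base $J$ and height $q$ is the engine of the argument, and I would use two features of it. First, by (ii), $T$ restricted to each floor $T^l J$ ($0 \le l < q$) is a pure translation (slope one, as $T$ is piecewise-isometric), so $T^b|_J$ is a translation for every $0 \le b \le q$; in particular $T^q|_J$ is translation by some constant $c$, hence $T^q J = J + c$, and since two intervals of length $\mathrm{Leb}(J)$ at offset $c$ overlap in measure $\max\{0, \mathrm{Leb}(J) - |c|\}$, (iv) forces $|c| < \varepsilon'\,\mathrm{Leb}(J) \le \varepsilon$. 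Second, from $\mathrm{Leb}(J \setminus T^{-q}J) = \mathrm{Leb}(J) - \mathrm{Leb}(J \cap T^q J) < \varepsilon'\,\mathrm{Leb}(J)$ (using (iv) and $T$-invariance of $\mathrm{Leb}$) and the inclusion $J \setminus T^{-(i+1)q}J \subseteq (J \setminus T^{-q}J)\cup T^{-q}(J \setminus T^{-iq}J)$, one gets $\mathrm{Leb}(J \setminus T^{-iq}J) < i\varepsilon'\,\mathrm{Leb}(J)$ for $1 \le i \le N$; hence $Z := J \cap \bigcap_{i=1}^{N} T^{-iq}J$ satisfies $\mathrm{Leb}(Z) > (1 - N^2\varepsilon')\,\mathrm{Leb}(J)$, and, the floors being disjoint and filling measure $> 1-\varepsilon'$ by (i) and (iii), the set $E := \bigcup_{l=0}^{q-1} T^l Z$ has $\mathrm{Leb}(E) = q\,\mathrm{Leb}(Z) > (1-N^2\varepsilon')(1-\varepsilon') > 1 - \delta$.

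To finish, I would check $E \subseteq A_{\varepsilon,r}$ with this $q$. Take $\omega = T^l z \in E$ (with $z \in Z$, $0 \le l < q$) and $0 \le k \le \lfloor rq\rfloor$; write $k + l = aq + b$ with $0 \le b < q$, so $a \le \lceil r\rceil$, whence $a, a+1 \in \{0,\dots,N\}$ and therefore $T^{aq}z, T^{(a+1)q}z \in J$, with $T^{(a+1)q}z = T^q(T^{aq}z) = T^{aq}z + c$. Since $T^b$ is a translation on $J$,
$$
T^{k+q}\omega - T^k\omega = T^{b}\big(T^{(a+1)q}z\big) - T^{b}\big(T^{aq}z\big) = c ,
$$
so $\mathrm{dist}(T^k\omega, T^{k+q}\omega) = |c| < \varepsilon$ for every such $k$. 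Thus $\mathrm{Leb}(A_{\varepsilon,r}) \ge \mathrm{Leb}(E) > 1-\delta$; letting $\delta \downarrow 0$ gives $\mathrm{Leb}(A_{\varepsilon,r}) = 1$, and intersecting over the countably many rational pairs $(\varepsilon,r)$ yields $\mathrm{Leb}(PRP(I^\lambda,T)) = 1$, i.e.\ (MRP).

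I expect the main obstacle to be organizational rather than deep: one must arrange that a single period $q$ repeats the orbit segment of length $\lfloor rq\rfloor$, which is why the tower must be re-traced on the order of $r$ times and hence why the iterated return estimate (and the choice $N\sim r$) is needed; and one must handle the usual mismatch between the discontinuous map $T_{\lambda,\pi}$ and the compact-metric homeomorphism framework of the paper. The conceptual point that makes everything work is Veech's linearity clause (ii): it promotes the purely measure-theoretic tower statement to the pointwise translation-control $T^{k+q}\omega - T^k\omega \equiv c$ with $|c|$ as small as one likes, which is exactly the content of the repetition property.
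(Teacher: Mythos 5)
There is a fundamental mismatch between what you were asked to prove and what you have proved. The statement in question is Veech's tower theorem itself (Theorem~\ref{veechthm}): the existence, for a.e.\ $\lambda$ and every $\varepsilon>0$, of a Rokhlin tower of intervals with the linearity property (ii), almost full measure (iii), and almost exact return (iv). Your write-up begins by ``intersecting \dots\ with the full-measure set on which Veech's Theorem~\ref{veechthm} applies'' --- that is, you assume the very statement you are supposed to establish, and then derive consequences from it. As a proof of Theorem~\ref{veechthm} this is circular and contains none of the required content: that theorem is a genuinely hard result in the metric theory of interval exchange transformations, whose proof (in Veech's 1984 paper, Theorem~1.4 of \cite{v}) rests on Rauzy--Veech induction and the Masur--Veech ergodicity/recurrence theory for the induced flow on the space of interval exchange data. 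The paper under review does not prove it either; it is imported verbatim as an external input, so there is no internal proof to compare yours against.

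What you have actually written is a proof of the \emph{downstream} result, Theorem~\ref{ietthm} (that a.e.\ $(I^\lambda,T_{\lambda,\pi},\mathrm{Leb})$ satisfies (MRP)), and for that theorem your argument is correct and in fact more careful than the paper's own sketch: your observation that (ii) makes $T^q|_J$ a translation by some $c$ with $|c|<\varepsilon'\,\mathrm{Leb}(J)$ by (iv), the iterated return estimate $\mathrm{Leb}(J\setminus T^{-iq}J)<i\varepsilon'\,\mathrm{Leb}(J)$, and the direct verification that each $A_{\varepsilon,r}$ has full measure (rather than the paper's looser ``$\mathrm{Leb}(I^\lambda\setminus I^\lambda_{\varepsilon,r})\lesssim r\varepsilon$, then Borel--Cantelli'') are all sound and fill in details the paper omits. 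If the intended target was Theorem~\ref{ietthm}, this would be a good solution. For the stated target, the entire proof is missing.
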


\begin{proof}[Proof of Theorem~\ref{ietthm}.]
Consider a $\lambda$ from the full measure subset of $\Lambda_m$
such that $(I^\lambda , T_{\lambda,\pi})$ is strictly ergodic and
all the consequences listed in Theorem~\ref{veechthm} hold. We
claim that $(I^\lambda , T_{\lambda,\pi}, \mathrm{Leb})$ satisfies
{\rm (MRP)}.

For $\varepsilon > 0$ and $r > 0$, consider the set
$I^\lambda_{\varepsilon , r}$ of points $\omega \in I^\lambda$ for
which there exists $q \in \Z_+$ such that
$\mathrm{dist}(T_{\lambda,\pi}^k \omega , T_{\lambda,\pi}^{k+q}
\omega) < \varepsilon$ for $k = 0, 1, 2 , \ldots , \lfloor rq
\rfloor$. By Theorem~\ref{veechthm}, we have
$$
\mathrm{Leb} (I^\lambda \setminus I^\lambda_{\varepsilon , r})
\lesssim r \varepsilon.
$$
Thus, by Borel-Cantelli, the set
$$
\bigcap_{j,r \in \Z_+} I^\lambda_{2^{-j} , r}
$$
has full Lebesgue measure. Since this set is contained in
$PRP(I^\lambda , T_{\lambda,\pi})$, it follows that $(I^\lambda ,
T_{\lambda,\pi}, \mathrm{Leb})$ satisfies (MRP).
\end{proof}

\noindent\textit{Remarks.} (a) The astute reader may point out
that we have defined (MRP) only for homeomorphisms, and an
interval exchange transformation is in general discontinuous.
This, however, can be remedied in two ways. The first is to pass
to a symbolic setting, where we code an orbit by the sequence of
the exchanged intervals it hits. The standard shift transformation
on this sequence space over a finite alphabet (of cardinality $m$)
is then a homeomorphism and we can work in this representation;
compare, for example, \cite[Section~5]{k}. Notice that in the
strictly ergodic situation, the spectral consequences for the
associated Schr\"odinger operator family are the same because the
unique invariant measure in the symbolic setting is the
push-forward of Lebesgue measure. The other way to circumvent this
issue is to extend our result relating (MRP) to the almost sure
absence of eigenvalues for the associated Schr\"odinger operators
to certain discontinuous maps $T$.
\\[1mm]
(b) In light of the previous remark, it is interesting to point
out the following. The metric repetition property (MRP) is not
invariant under the passage to the symbolic setting. Indeed, while
we showed that every irrational rotation of the circle obeys
(MRP), its symbolic counterpart (a Sturmian sequence, resulting
from the symbolic coding of an exchange of two intervals)
satisfies (MRP) if and only if the rotation number has unbounded
partial quotients; see, for example, \cite{bd,dp,m2}.

\subsection{Uniformity in the Coupling
Constant}\label{couplingconstant}

One often introduces a coupling constant $\lambda$ and considers
potentials of the form
\begin{equation}\label{lambdapot}
V_\omega(n) = \lambda f(T^n \omega)
\end{equation}
instead of \eqref{pot}. Since the regimes of small and large
couplings can be regarded as small perturbations of simple models
(the free Laplacian and a diagonal matrix, respectively), it is of
especial interest to explore whether the spectral type of the limit
model extends to the perturbation.

In this context it should be noted that if $V_\omega$ of the form
\eqref{lambdapot} is a Gordon potential for one value of $\lambda$,
it is a Gordon potential for all values of $\lambda$. In particular,
the results on the absence of point spectrum above extend from
potentials of the form \eqref{pot} covered by our work to all
potentials of the more general form \eqref{lambdapot}.

\subsection{Generic Singular Continuous Spectrum}\label{scspec}

Our work is closely related in spirit to the paper \cite{ad} by
Avila and Damanik. It follows from \cite{ad} that there exists a
residual set $\mathcal{F}_{\mathrm{sing}} \subseteq C(\Omega)$ such
that for every $f \in \mathcal{F}_{\mathrm{sing}}$, the operator
\eqref{oper} with potential \eqref{pot} has purely singular spectrum
for almost every $\omega \in \Omega$, and moreover, for Lebesgue
almost every $\lambda \in \R$, the operator \eqref{oper} with
potential \eqref{lambdapot} has purely singular spectrum for almost
every $\omega \in \Omega$.

It is well known that discrete one-dimensional Schr\"odinger
operators with periodic potentials have purely absolutely continuous
spectrum. Thus, the results obtained in \cite{ad} and the present
paper are rather strong implementations of the philosophy that
absolutely continuous spectrum requires the presence of perfect
repetition, while a rather weak repetition property already ensures
the absence of eigenvalues.

Explicitly, the following consequence is obtained by combining the
results on the absence of eigenvalues and the results on the absence
of absolutely continuous spectrum.

\begin{theorem}\label{genscthm}
Suppose that $(\Omega,T,\mu)$ satisfies {\rm (MRP)}. Then, for $f$'s
from a residual subset of $C(\Omega)$, the operator \eqref{oper}
with potential \eqref{pot} has purely singular continuous spectrum
for $\mu$ almost every $\omega \in \Omega$, and moreover, the
operator \eqref{oper} with potential \eqref{lambdapot} has purely
singular continuous spectrum for $\mu$ almost every $\omega \in
\Omega$ and Lebesgue almost every $\lambda \in \R$.
\end{theorem}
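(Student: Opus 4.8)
The plan is to intersect the residual set of sampling functions produced by Theorem~\ref{srpthm} with the one produced in \cite{ad} (recalled in Subsection~\ref{scspec}), and then, at the level of each individual operator, to combine Gordon's lemma (which kills the point spectrum) with the absence of absolutely continuous spectrum coming from \cite{ad}. Since $C(\Omega)$ with the supremum norm is a complete metric space, hence a Baire space, a finite intersection of residual subsets is again residual, so no difficulty arises on the function-space side. Concretely, let $\mathcal{F}_{\mathrm{Gor}} \subseteq C(\Omega)$ be the residual set from Theorem~\ref{srpthm}, so that for $f \in \mathcal{F}_{\mathrm{Gor}}$ there is a set $\Omega_f \subseteq \Omega$ with $\mu(\Omega_f) = 1$ on which $V_\omega$ is a Gordon potential; let $\mathcal{F}_{\mathrm{sing}} \subseteq C(\Omega)$ be the residual set from \cite{ad}; and put $\mathcal{F} = \mathcal{F}_{\mathrm{Gor}} \cap \mathcal{F}_{\mathrm{sing}}$, which is residual. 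Fix $f \in \mathcal{F}$ for the remainder.

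For the potential \eqref{pot}: by \cite{ad} there is a set $A_f \subseteq \Omega$ with $\mu(A_f) = 1$ such that $H_\omega$ has purely singular spectrum for $\omega \in A_f$, i.e.\ no absolutely continuous part. On the other hand, for $\omega \in \Omega_f$ the potential $V_\omega$ is a Gordon potential, so by Gordon's lemma as quoted in the Introduction (see \cite{cfks,dp,g}) the operator $H_\omega$ has no eigenvalues — in fact the eigenvalue equation has no nontrivial decaying solution at any energy — whence its spectral measure has trivial point part. Consequently, for every $\omega$ in the full-measure set $\Omega_f \cap A_f$, the operator $H_\omega$ has purely singular continuous spectrum. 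This establishes the first assertion.

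For the potential \eqref{lambdapot}: recall from Subsection~\ref{couplingconstant} that if $V_\omega$ is a Gordon potential then so is $\lambda V_\omega$ for every $\lambda \in \R$. Hence for $\omega \in \Omega_f$ the operator with potential \eqref{lambdapot} has empty point spectrum for \emph{all} $\lambda$ simultaneously, with the exceptional $\omega$-set $\Omega \setminus \Omega_f$ independent of $\lambda$. Meanwhile \cite{ad} provides a set $L_f \subseteq \R$ of full Lebesgue measure such that for each $\lambda \in L_f$ there is a set $A_{f,\lambda} \subseteq \Omega$ with $\mu(A_{f,\lambda}) = 1$ on which the operator with potential \eqref{lambdapot} has purely singular spectrum. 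Thus for $\lambda \in L_f$ and $\omega \in \Omega_f \cap A_{f,\lambda}$ (again a set of full $\mu$ measure) the operator with potential \eqref{lambdapot} has purely singular continuous spectrum, which is the second assertion.

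The argument is in essence bookkeeping once Theorem~\ref{srpthm} and \cite{ad} are in hand; the two points deserving a line of care are that "no absolutely continuous part" together with "no point part" genuinely yields "purely singular continuous", and that the $\mu$-null exceptional sets can be absorbed — in the first statement by intersecting two fixed full-measure sets, and in the second by exploiting that $\Omega_f$ does not depend on $\lambda$, so that for Lebesgue-a.e.\ $\lambda$ the combined exceptional $\omega$-set is still $\mu$-null. I do not expect any genuine obstacle beyond the inputs already established.
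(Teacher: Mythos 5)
Your proposal is correct and follows exactly the route the paper intends: the paper presents Theorem~\ref{genscthm} as an immediate consequence of combining Theorem~\ref{srpthm} (Gordon potentials, hence no eigenvalues) with the generic absence of absolutely continuous spectrum from \cite{ad}, intersecting the two residual sets in $C(\Omega)$ and the corresponding full-measure sets in $\Omega$ (and, for \eqref{lambdapot}, using that the Gordon property is $\lambda$-independent). Your bookkeeping of the exceptional sets is exactly what the paper leaves implicit.
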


Recall that by Theorems~\ref{shiftrpthm} and \ref{sshiftrpthm},
Theorem~\ref{genscthm} applies to all minimal shifts of the form
\eqref{qpmodels} and minimal skew-shifts of the form
\eqref{ssmodels}, where $\alpha$ is not badly approximable (which is
satisfied by Lebesgue almost every $\alpha$).

\subsection{A Remark on One-Frequency Quasi-Periodic Models}

Here we discuss a technical point that appears to be remarkable in
comparison with earlier studies of Gordon potentials generated by
shifts on $\T$ (a.k.a.\ rotations of the circle). There are three
truly distinct classes of sampling functions that lead to
technically very different theories on the level of Schr\"odinger
operators: piecewise constant functions with finitely many
discontinuities (a.k.a.\ codings of rotations), continuous
functions, and smooth functions. Very roughly speaking,
one-frequency quasi-periodic models with piecewise constant sampling
functions seem to have, as a rule, purely singular continuous
spectrum, whereas one-frequency quasi-periodic models with smooth
sampling functions seem to have purely absolutely continuous
spectrum for small coupling and pure point spectrum for large
coupling; see, for example, \cite{bg,bj,d2} and references therein.

Continuous sampling functions fall between these two classes and in
some sense, it is natural to use an approximation of a given
continuous function from either side (i.e., by piecewise continuous
functions or by smooth functions) in the study of such potentials.
While it had been expected that models with continuous sampling
function behave similarly to models with smooth sampling functions,
recent work (especially \cite{ad} and the present paper) has shown
that in fact they behave generically like models with piecewise
continuous step functions.

The technical point we would like to make is the following. While
\cite{ad} indeed used approximation by piecewise constant functions
as a key tool in the proof, in this paper we did not! We were able
to show a very general result: by Theorems~\ref{srpthm} and
\ref{shiftrpthm} we see that for any irrational shift on $\T$, a
generic continuous sampling will almost surely generate a Gordon
potential. It is clear from the proof that much stronger Gordon-type
conditions (with more repetitions and smaller error estimates) can
be obtained in the same way. This is surprising in the case of a
badly approximable $\alpha$. It can be shown, \cite{bd}, that for
any piecewise constant function with finitely many discontinuities
(that is not globally constant), such Gordon-type repetition
properties do not hold for badly approximable $\alpha$.
Consequently, the general result just described cannot be obtained
by approximation with piecewise constant functions. Of course, for
smooth functions, badly approximable $\alpha$ do not generate Gordon
potentials either. Thus, for shifts on $\T$ by a badly approximable
$\alpha$, the generic Gordon property in the continuous category is
a novel feature.

\subsection{Some Remarks on the Repetition
Properties}\label{ss.remarks}

It is a natural question whether there are any non-trivial
relations between the topological, metric, and global repetition
properties. For example, as was pointed out earlier, in all the
examples we have considered up to this point in the present paper,
the properties (TRP), (MRP), and (GRP) either hold or fail
simultaneously, so one could ask if there is a general principle.
The following result answers this question.

\begin{prop}
{\rm (a)} There are strictly ergodic examples of $(\Omega,T)$ that
satisfy {\rm (MRP)} but not {\rm (GRP)}.  \\
{\rm (b)} There are strictly ergodic examples of $(\Omega,T)$ that
satisfy {\rm (TRP)} but not {\rm (MRP)}.
\end{prop}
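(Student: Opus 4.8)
The plan is to produce both examples as explicit minimal subshifts over a finite alphabet, built by a hierarchical concatenation scheme, and then to verify minimality, unique ergodicity, and the exact size of $PRP$. The organizing observation is the symbolic reformulation of the repetition property: if $T$ is the shift, the forward orbit of $\omega$ has the repetition property if and only if for every $N$ and every $r$ there is a $q$ such that $\omega$ restricted to $\{-N,\dots,\lfloor rq\rfloor+N\}$ is $q$-periodic; equivalently, around its origin $\omega$ contains, arbitrarily deep in the hierarchy, runs $w^{m}$ of vanishing relative period ($w$ of length $q$, $m\to\infty$). So everything reduces to controlling, at each scale, where the long nearly-periodic runs sit, as seen with the origin in view.

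For (b) (TRP but not MRP), I would fix a two-letter alphabet, a primitive word $W_0$, and build $W_{n+1}=P_n\,W_n^{k_n}\,Q_n$ with $k_n\to\infty$ and $|W_n|\to\infty$, where the ``seam words'' $P_n,Q_n$ are concatenations of $W_n$'s and shorter words chosen so that (i) every word of length $n$ in the language built so far occurs in $P_nQ_n$ with bounded gaps and with frequencies that converge along the hierarchy, (ii) neither $P_n$ nor $Q_n$ contains a run $W_n^{j}$ with $j$ large, and, crucially, (iii) $|P_n|,|Q_n|\gg k_n|W_n|$, so that the run $W_n^{k_n}$ occupies only a vanishing fraction of $W_{n+1}$. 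Then: (1) minimality follows from the bounded-gap condition and unique ergodicity from the convergence of word frequencies (standard for controlled $S$-adic systems); (2) the distinguished point $x$ is the two-sided limit centered so that, for each $n$, the origin sits in the middle of a nested copy of $W_n^{k_n}$ (nesting achieved by placing the run at a fixed internal position of each $W_{n+1}$); since these runs give $|W_n|$-periodic windows of length $\asymp k_n|W_n|$ around the origin and $k_n\to\infty$, the forward orbit of $x$ has the repetition property, so TRP holds; (3) $\mu(PRP)=0$ by Borel--Cantelli, since the set of points lying in some level-$n$ run of $W_n$ of length $\ge k_n|W_n|$ has $\mu$-measure $\asymp(\text{frequency of }W_n)\cdot k_n|W_n|$, which (iii) makes summable. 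Hence $\mu$-a.e.\ $\omega$ lies in only finitely many such runs; as the construction creates no other runs of vanishing relative period (runs straddling concatenation seams involve $O(1)$ words and have bounded relative period), such $\omega$ has a uniform bound on the relative period of periodic windows about its origin, so is not super-recurrent.

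For (a) (MRP but not GRP), I would run a similar scheme but arrange the long nearly-periodic runs to have \emph{full} density, so that $\mu$-a.e.\ point falls into infinitely many of them and is super-recurrent (giving MRP), while a single distinguished orbit is kept out of every run by having its origin perpetually straddle, at every scale, the seam between two runs of \emph{distinct} period-words $A_n^{k_n}$ and $B_n^{k_n}$ — so that no periodic window about its origin survives and GRP fails. Minimality and unique ergodicity are arranged as in (b); MRP follows from a reverse-Borel--Cantelli / frequency argument once the runs have density bounded below and the construction is sufficiently ``shuffled'' that run-membership at distinct scales is quasi-independent.

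The main obstacle is in part (a): one must prevent the distinguished seam-orbit from being swallowed by a coarser-scale run. In a naive self-similar construction (with $k_n\to\infty$) every position eventually lies inside arbitrarily-long-relative-period runs, because each $W_n^{k_n}$ occurs inside copies of $W_{n+1}$ which in turn occur inside level-$(n+1)$ runs; such a system satisfies GRP, not merely MRP. So the successive-scale runs must be deliberately misaligned, so that lying at a level-$n$ seam does not force lying inside a level-$(n+1)$ run, and this misalignment must coexist with full density of the runs (for MRP) and with minimality and unique ergodicity. Part (b) is more robust, as sparse runs make the Borel--Cantelli measure estimate automatic.
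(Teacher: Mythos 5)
Your strategy is genuinely different from the paper's (which for (a) takes the Sturmian subshift of a rotation with unbounded partial quotients, and for (b) uses skew-products on $\T^d$, $d\ge 4$, with well-approximable $\alpha$, deferring to \cite{bd2}), but as written it has a real gap. The repetition property quantifies over \emph{all} periods $q$: to show a point fails it, you must rule out long $q$-periodic windows pinned at its origin for every $q$, not only for $q=|W_n|$. Your entire negative side in both parts rests on the parenthetical assertion that ``the construction creates no other runs of vanishing relative period''; bounding the index of all factors of an $S$-adic system (via Fine--Wilf or return-word arguments, and taking into account periods incommensurate with the $|W_n|$) is precisely the technical core, and it is asserted rather than proved. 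Worse, in part (a) you explicitly name the main obstacle --- keeping the seam orbit from being swallowed by coarser-scale runs while the runs have full density and the system stays uniquely ergodic --- and leave it unresolved, so (a) is not established.

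That obstacle in (a) is, however, illusory, and seeing why exposes the idea you are missing: \emph{no} aperiodic subshift satisfies (GRP). This is the general principle the paper invokes (derived from Berth\'e--Holton--Zamboni on initial powers of Sturmian sequences): in any aperiodic subshift there is a point whose initial powers are bounded, hence not in $PRP$. In particular your fear that a naive full-density construction ``satisfies GRP, not merely MRP'' cannot materialize. Once this is known, part (a) reduces to exhibiting any strictly ergodic aperiodic subshift with (MRP), for which the Sturmian system of a rotation with unbounded partial quotients is already on record; no bespoke construction or seam orbit is needed. For part (b) your hierarchical subshift is a plausible alternative to the paper's torus skew-products --- the positive half (TRP at the nested-run point) and the Borel--Cantelli measure estimate are fine --- but the conclusion that a.e.\ point fails the repetition property still stands or falls with the missing control of all periodicities outside the designated runs.
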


\begin{proof}
(a) The following general result holds: If $\mathcal{A}$ is a
finite set, $T$ is the standard shift-transformation on the
sequence space $\mathcal{A}^\Z$, and $\Omega$ is a closed,
$T$-invariant subset of $\mathcal{A}^\Z$ (a so-called subshift)
that contains points that are not $T$-periodic, then $(\Omega,T)$
does not satisfy (GRP). This can be derived from
\cite[Proposition~2.1]{bhz}.\footnote{There, only minimal
aperiodic subshifts are considered. If $(\Omega,T)$ is not
minimal, we can just choose a minimal component of the given
subshift and find an orbit without repetition property there.} In
particular, strictly ergodic examples can be constructed in this
way.

To describe an explicit example, take $\alpha$ with unbounded
partial quotients and consider the natural coding of the shift by
$\alpha$ on $\T$. The resulting Sturmian subshift satisfies (MRP)
by \cite{bd,dp} and it does not satisfy (GRP) by the general
result just mentioned.
\\
(b) Here we only describe a procedure that generates the desired
examples; we refer the reader to \cite{bd2}, where complete proofs
and a much more detailed discussion can be found. Consider
skew-products in the spirit of \eqref{ssmodels} on higher
dimensional tori. That is, let $\Omega = \T^d$ and let, for
example, $T : \Omega \to \Omega$ be given by
$$
T(\omega_1,\omega_2, \omega_3, \ldots, \omega_d) = (\omega_1 +
\alpha , \omega_1 + \omega_2, \omega_1 + \omega_2 + \omega_3 ,
\ldots , \omega_1 + \cdots + \omega_d),
$$
where $\alpha$ is irrational. Then, $(\Omega,T)$ is minimal and
normalized Lebesgue measure is the unique invariant probability
measure; see, for example, \cite{f}. If $d \ge 4$,
$(\Omega,T,\mathrm{Leb})$ does not satisfy (MRP) for any $\alpha$.
On the other hand, if $\alpha$ is sufficiently well approximated
by rational numbers, $(\Omega,T)$ does satisfy (TRP). Again, see
\cite{bd2} for proofs of these two statements in a more general
context.
\end{proof}

We conclude with a brief discussion of how the repetition properties
relate to notions of entropy. The following result establishes a
connection:

\begin{prop}\label{entropyprop}
Suppose $(\Omega,T)$ is a subshift over a finite set $\mathcal{A}$
and $\mu$ is an ergodic probability measure. If $(\Omega,T,\mu)$
satisfies {\rm (MRP)}, then it has metric entropy zero.
\end{prop}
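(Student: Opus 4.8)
The plan is to combine the repetition property with the Shannon--McMillan--Breiman theorem. Write $\Omega\subseteq\mathcal{A}^\Z$ and equip $\Omega$ with the standard product metric (legitimate, since by the remark following the first definition the repetition property is independent of the metric), so that $\mathrm{dist}(x,y)<1$ forces $x_0=y_0$. Let $\mathcal{P}=\{[a]:a\in\mathcal{A}\}$ be the partition of $\Omega$ into one-symbol cylinders; it is a finite generating partition, so $h_\mu(T)=h_\mu(T,\mathcal{P})$ and the Shannon--McMillan--Breiman theorem applies to it. Just as at the beginning of the proof of Theorem~\ref{srpthm}, (MRP) together with ergodicity gives $\mu(PRP(\Omega,T))=1$. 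For $\omega\in PRP(\Omega,T)$, the repetition property with $\varepsilon=1$ shows that for every $r>0$ there is $q=q(\omega,r)\in\Z_+$ with $\omega_k=\omega_{k+q}$ for $0\le k\le\lfloor rq\rfloor$; that is, the word $\omega_0\omega_1\cdots\omega_{\lfloor rq\rfloor-1}$ is periodic with period $q$. Denote by $q_{\min}(\omega,r)$ the least such $q$; it is non-decreasing in $r$. If $q_{\min}(\,\cdot\,,r)$ stayed bounded on a set of positive measure, then by ergodicity there would be a fixed $q_0$ with $\omega_k=\omega_{k+q_0}$ for all $k\ge0$ for $\mu$-a.e.\ $\omega$, so that $\bigvee_{i=0}^{n-1}T^{-i}\mathcal{P}$ coincides $\mu$-a.e.\ with $\bigvee_{i=0}^{q_0-1}T^{-i}\mathcal{P}$ and $h_\mu(T)=0$ trivially. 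So I may assume $q_{\min}(\omega,r)\to\infty$ as $r\to\infty$ for $\mu$-a.e.\ $\omega$; the key point is then that almost every point admits periodic prefixes whose period $q$ is arbitrarily large while the prefix length $\lfloor rq\rfloor$ exceeds $q$ only by the fixed factor $r$.

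Now suppose, for contradiction, that $h:=h_\mu(T)>0$, and fix $\epsilon\in(0,h)$. For $n,q\in\Z_+$ let $G_{n,q}$ be the union of all length-$n$ cylinders $[w]$ with $w$ periodic of period $q$; since such a $w$ is determined by its first $q$ letters, $G_{n,q}$ is a union of at most $|\mathcal{A}|^q$ cylinders of length $n$. Choose $r$ so large that $|\mathcal{A}|\,e^{-(h-\epsilon)r}<1/2$. By Shannon--McMillan--Breiman, $-\frac{1}{n}\log\mu([\omega_0\cdots\omega_{n-1}])\to h$ for $\mu$-a.e.\ $\omega$, so by Egorov's theorem there is $n_0$ with $\mu(X_{n_0})>1/2$, where $X_{n_0}=\{\omega:\mu([\omega_0\cdots\omega_{n-1}])\le e^{-(h-\epsilon)n}\text{ for all }n\ge n_0\}$. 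Fix $m$ with $\lfloor rm\rfloor\ge n_0$ and set $A_{r,m}=\{\omega:\omega_0\cdots\omega_{\lfloor rq\rfloor-1}\text{ is periodic of period }q\text{ for some }q\ge m\}$. The first paragraph gives $\mu(A_{r,m})=1$ (for $\mu$-a.e.\ $\omega$, pick $r'\ge r$ with $q:=q_{\min}(\omega,r')\ge m$; the period-$q$ prefix of length $\lfloor r'q\rfloor$ contains a period-$q$ prefix of length $\lfloor rq\rfloor$). On the other hand, any cylinder $[w]$ with $|w|\ge n_0$ and $\mu([w])>e^{-(h-\epsilon)|w|}$ is disjoint from $X_{n_0}$, so $X_{n_0}\cap G_{n,q}$ is covered by at most $|\mathcal{A}|^q$ cylinders of length $n$, each of measure $\le e^{-(h-\epsilon)n}$. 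Since $A_{r,m}\subseteq\bigcup_{q\ge m}G_{\lfloor rq\rfloor,q}$ and $\lfloor rq\rfloor\ge rq-1$, summing over $q\ge m$ and using the choice of $r$ yields
\[
\frac{1}{2}<\mu(X_{n_0}\cap A_{r,m})\le\sum_{q\ge m}|\mathcal{A}|^q e^{-(h-\epsilon)\lfloor rq\rfloor}\le e^{h-\epsilon}\sum_{q\ge m}2^{-q}=e^{h-\epsilon}\,2^{1-m}.
\]
The right-hand side tends to $0$ as $m\to\infty$, the desired contradiction; hence $h_\mu(T)=0$.

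The step I expect to be the main obstacle is precisely the bridge between the two ingredients: Shannon--McMillan--Breiman controls the measure of the single cylinder containing a $\mu$-typical point, but the counting argument needs the uniform conclusion that \emph{every} length-$n$ cylinder meeting a fixed large set has measure at most $e^{-(h-\epsilon)n}$ --- which is exactly what the Egorov set $X_{n_0}$ supplies. Once that is in place, the combinatorial input (there are only $|\mathcal{A}|^q$ words of period $q$, and a periodic prefix can be found of length only $r$ times the period) defeats the exponential lower bound of order $e^{(h-\epsilon)n}$ on the number of non-negligible length-$n$ cylinders, forcing $h=0$. The remaining ingredients are routine: the reduction of the metric repetition condition to the combinatorial identity $\omega_k=\omega_{k+q}$, the dichotomy producing either a (measure-theoretically) periodic system or $q_{\min}(\omega,r)\to\infty$, and the fact that $\mathcal{P}$ is generating so that $h_\mu(T)=h_\mu(T,\mathcal{P})$.
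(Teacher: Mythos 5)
Your argument is correct. The paper itself gives no proof at all: it disposes of Proposition~\ref{entropyprop} with a one-line citation to the Ornstein--Weiss return-time theorem (\cite{ow}) and to Boshernitzan's quantitative recurrence results (\cite{bosh}). The intended mechanism there is that for an ergodic process of entropy $h$ the first return time of the $n$-block of a typical point grows like $e^{hn}$, whereas (MRP) forces, for almost every point, blocks of length $\lfloor rq\rfloor$ that recur after only $q\approx n/r$ steps; this is incompatible with $h>0$. Your proof is a self-contained implementation of exactly this tension: the Shannon--McMillan--Breiman theorem plus the set $X_{n_0}$ give the uniform lower bound $e^{(h-\epsilon)n}$ on the number of length-$n$ cylinders needed to cover a set of measure $>1/2$, while the periodicity forced by (MRP) (with $\varepsilon=1$ in a product metric, which is legitimate by the metric-independence remark) caps the count at $|\mathcal{A}|^{n/r}$, and taking $r$ large wins. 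In effect you reprove the special case of Ornstein--Weiss that is actually used, which is arguably more informative than the citation. Two cosmetic points: in the degenerate branch, the pigeonhole gives each $\omega$ its own period $q(\omega)\le q_0$, so the common period for the a.e.\ statement should be $\mathrm{lcm}(1,\dots,q_0)$ rather than $q_0$ itself; and Egorov is not needed, since the sets $X_{n_0}$ increase to a full-measure set and monotone convergence already yields $\mu(X_{n_0})>1/2$ for large $n_0$. Neither affects the validity of the argument.
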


\begin{proof}
This follows from \cite{ow}; see also \cite{bosh}.
\end{proof}

In view of Theorem~\ref{srpthm}, Proposition~\ref{entropyprop} is
nicely in line with the general expectation that positive entropy
strongly suggests positive Lyapunov exponents and localization for
the associated Schr\"odinger operators, at least in an
almost-everywhere sense. It would be interesting to see whether
there is a topological analogue. That is, is it true that the
validity of (TRP) implies zero topological entropy? Moreover, what
conclusions (in the spirit of entropy) can be drawn when a dynamical
system satisfies (GRP)?

\end{document}